\newtheorem{theorem}{Theorem}[section]
\newtheorem{proposition}[theorem]{Proposition}
\theoremstyle{definition}
\newtheorem{example}[theorem]{Example}
\newtheorem{remark}[theorem]{Remark}
\numberwithin{equation}{section}
\begin{document}
\makeatletter

\begin{center}
\large{\bf Fast  projection onto the intersection of  simplex and singly linear  constraint and its generalized Jacobian}
\end{center}\vspace{5mm}
\begin{center}

\textsc{Weimi Zhou\footnote{ School of Mathematics and Statistics, Fuzhou University, No. 2 Wulongjiang North Avenue, Fuzhou, 350108, Fujian, China. Email: wmzhou1997@163.com}, Yong-Jin Liu\footnote{Corresponding author. Center for Applied Mathematics of Fujian Province, School of Mathematics and Statistics, Fuzhou University, No. 2 Wulongjiang North Avenue, Fuzhou, 350108, Fujian, China. Email: yjliu@fzu.edu.cn}}\end{center}

\vspace{2mm}

\footnotesize{
\noindent\begin{minipage}{14cm}
{\bf Abstract:}
Solving the distributional worst-case in the distributionally robust optimization problem is equivalent to finding the projection onto the intersection of simplex and singly linear inequality constraint. This projection is a key component in the design of  efficient first-order algorithms. This paper focuses on developing efficient algorithms for computing the projection onto the intersection of simplex and singly linear inequality constraint.  Based on the Lagrangian duality theory, the studied projection can be obtained by solving a univariate nonsmooth equation. We employ an algorithm called LRSA, which leverages the Lagrangian duality approach and the secant method to compute this projection.  In this algorithm, a modified secant method is specifically designed to solve the piecewise linear equation.  Additionally, due to semismoothness of the resulting equation, the semismooth Newton (SSN) method is a natural choice  for solving it. Numerical experiments demonstrate that LRSA outperforms SSN algorithm and the state-of-the-art optimization solver called Gurobi. Moreover, we derive  explicit formulas for the generalized HS-Jacobian of the  projection, which are essential for designing  second-order nonsmooth Newton algorithms.
\end{minipage}
 \\[5mm]

\noindent{\bf Keywords:} {Projection, Simplex,  Secant method, Semismooth Newton method, Generalized Jacobian}\\
\noindent{\bf Mathematics Subject Classification:} {90C20, 90C25}

\hbox to14cm{\hrulefill}\par


\section{Introduction}
Let $\Delta_{n-1}$  be the simplex in $\mathbb{R}^n$ given by $\Delta_{n-1}: =\{\bm{x}\in\mathbb{R}^n\ |\ \bm{e}^{\top}\bm{x}= 1,\bm{x}\geq 0 \},$ where $\bm{e}$ denotes  the column vector of all ones. Given $\bm{y}\in\mathbb{R}^n, \bm{a}\in\mathbb{R}^n,$ and  $b\in\mathbb{R}$, we are concerned with the  efficient  projection of the vector $\bm{y}$ onto the intersection of simplex and singly linear inequality constraint, i.e., 
$$
\mathcal{C} = \{\bm{x}\in\mathbb{R}^n \ |  \ \bm{a}^{\top}\bm{x}\leq b , \bm{x}\in\Delta_{n-1}\}.
$$
It is well known that the projection onto the set ${\mathcal{C}}$, denoted by $\Pi_{\mathcal{C}}(\bm{y})$, is the unique optimal solution to the following optimization problem:
\begin{equation}\label{1.1}\tag{P}
	\mathop{\min}\limits_{\bm{x}\in\mathcal{C}} \ \frac{1}{2}\|\bm{x}-\bm{y}\|^2.	
\end{equation}
The motivation for studying problem \eqref{1.1} comes from the characteristics of the model and the design of efficient algorithms in the field of distributionally robust optimization. The purpose of distributionally robust optimization is to find a decision that minimizes the expected cost in the worst case. In terms of the characteristics of the distributionally robust optimization model,  Adam and  M{\'a}cha \cite{L.V.2022} demonstrated that solving  the distributional worst-case  is equivalent to computing the projection onto the set $\mathcal{C}$.  In addition, the set $\mathcal{C}$ involved in the constraints often appears in distributionally robust optimization portfolio models.  Chen et al. \cite{C.W.L.D.C.2022} transformed  the  Wasserstein metric-based data-driven distributionally robust mean-absolute deviation model into two simple finite-dimensional linear programs, one of which is expressed as follows:
\begin{equation}\label{1.2}
	\begin{split}
		\mathop{\min}\limits_{\bm{x}\in\mathbb{R}^n}& \ \max\{\frac{1}{m}\sum_{i=1}^{m}|\tilde{\bm{\mu}}^{\top}\bm{x}-(\hat{\bm{\xi}}_i)^{\top}\bm{x}-\epsilon|+\epsilon, \frac{1}{m}\sum_{i=1}^{m}|\tilde{\bm{\mu}}^{\top}\bm{x}-(\hat{\bm{\xi}}_i)^{\top}\bm{x}+\epsilon|+\epsilon\}   \\
		\mbox{s.t.}\  & \ \tilde{\bm{\mu}}^{\top}\bm{x}\geq \tilde{\rho}+\epsilon,\ x\in\Delta_{n-1},
	\end{split}
\end{equation}
where  the random vectors $\hat{\bm{\xi}}_1, \dots, \hat{\bm{\xi}}_m\in\mathbb{R}^n$ denote the assets return, $\tilde{\bm{\mu}}=1/m\sum_{i=1}^{m}\hat{\bm{\xi}}_i$, $\epsilon$ represents the radius of the Wasserstein ball,  and $ \tilde{\rho}$ is given. From the perspective of algorithmic design, it is necessary to compute the projection onto the set $\mathcal{C}$ when  applying methods such as the augmented Lagrangian method or the proximal gradient method to solve problems involving a simplex and a single linear constraint.  Furthermore, the corresponding generalized Jacobian of the projection onto a closed convex set  is a necessary ingredient in some second-order nonsmooth algorithms \cite{LXD.S.T.2018,L.S.T.2020,MXL.YJL.DFS.KCT.2019,FS.LYJ.XXZ.2021,LMX.SDF.TKC.2019,LMX.SDF.TKC.2022}.

This paper aims to design  efficient algorithms for finding the projection onto the intersection of  simplex and  singly linear inequality constraint. Moreover, we intend to derive the explicit form of its generalized Jacobian matrix. Our main idea is inspired by the seminal work \cite{L.L.2017,L.S.T.2018,W.L.L.2022,L.X.L.2022,L.W.S.S.2013,C.L.S.T.2016,LXD.S.T.2018}. For  projections onto the intersection of linear constraint and box-like constraint, the algorithms demonstrate excellent performance.   Combining the sparse reconstruction by separable approximation (SpaRSA)  \cite{SJW.RDN.MATF.2009} with the dual active set algorithm, Hager and Zhang \cite{WWH.HZ.2016} proposed an algorithm for projecting a given point onto  polyhedron. They provided an efficient implementation called PPROJ, which is used in \cite{WWH.HZ.2023,DD.HWW.TG.VM.2023}. The modified secant algorithm proposed by Dai and Fletcher \cite{D.F.2006} is applied to calculate the projection onto the intersection of  singly equality constraint and  box constraint. Liu and Liu \cite{L.L.2017} proposed an efficient algorithm on the basis of parameter approach and  modified secant method for solving singly linearly constrained quadratic programs with box-like constraint,   which outperforms advanced solvers such as Gurobi and Mosek.  Wang et al. \cite{W.L.L.2022} developed a fast algorithm based on Lagrangian dual method and semismooth Newton method, where  the semismooth Newton method replaces the modified secant method in \cite{L.L.2017} for  finding the root of a piecewise affine equation. The  algorithm based on Lagrangian duality approach and secant method  has good numerical performance  in calculating the projection onto the polyhedron. Moreover, the semismooth Newton method is  a common second-order method for computing the projection. Therefore, we would like  to  adopt these two algorithms  to compute the projection onto the intersection of simplex and singly linear inequality constraint.

The main contributions of our paper are summarized as follows. Firstly, we provide theoretical results on the projection onto a simplex and adopt an efficient algorithm for computing  it.  Secondly,  leveraging the Lagrangian duality theory, the optimal solution to problem \eqref{1.1} can be obtained by solving a univariate nonsmooth equation. We develop an algorithm based on the Lagrangian duality approach and modified secant method (referred to as LRSA), where the modified secant method is specifically designed to solve the piecewise linear equation. Additionally, we design an algorithm based on the Lagrangian duality  and semismooth Newton method (referred to as SSN) to compute the studied projection, in which the semismooth Newton method is applied to solve the nonsmooth equation. We also derive the generalized Clarke's differential required for the semismooth Newton method. Thirdly,
 we conduct experiments on  problem \eqref{1.1}  and demonstrate the superiority of LRSA  by comparing algorithms LRSA and SSN with the state-of-the-art solver Gurobi \cite{Gurobi}. Finally, we derive the generalized HS-Jacobian \cite{H.S.1997} of the projection we studied, which is essential for the future development of the semismooth Newton proximal point algorithm to solve the related constrained problems.

The remaining parts of this paper are organized as follows. 
Section \ref{sect:2} is devoted to presenting some properties of the metric projection onto the simplex. In Section \ref{sect:3}, we  derive the  dual of the projection problem \eqref{1.1} and analyze some properties of its objective function. Building on these foundations, we develop two algorithms for solving the dual problem: one is based on the secant method, and the other is based on the semismooth Newton method.
 In Section \ref{sect:5}, we compare these two algorithms with Gurobi on  random data and real data.  Section \ref{sect:6} is dedicated to computing the generalized HS-Jacobian of $\Pi_{\mathcal{C}}(\cdot)$.  We make the conclusion of this paper in Section \ref{sect:7}.

{\bf{Notation:}}\ 
For given positive integer $m$, we denote $\bm{I}_m$ and ${\bf{0}}_m$ as the $m\times m$ identity matrix and the $m\times m$ zeros matrix, respectively. For given $\bm{x}\in \mathbb{R}^n$, we use $``{\rm{sgn}}(\bm{x})"$ to denote the sgn vector whose $i$-th entry is $1$ if  $\bm{x}_i>0,$ $-1$ if $\bm{x}_i<0$, and $0$ otherwise. Denote ${\rm{Diag}}(\bm{x})$ as the diagonal matrix whose diagonal is given by vector $\bm{x}$. Given a matrix $\bm{B}\in\mathbb{R}^{n \times m}$, we  denote  the Moore-Penrose inverse of $\bm{B}$ by $\bm{B}^{\dagger}$. For given index set ${\mathcal{I}}\subseteq\{1,2,\dots,n\}$, we use $|{\mathcal{I}}|$ to define the cardinality of $\mathcal{I}$, and  use $\bm{B}_{\mathcal{I}}$ to denote the sub-matrix of $\bm{B}$ by extracting all the rows of $\bm{B}$ in $\mathcal{I}$. $\max(\bm{a})$ denotes the maximum component of the column vector  $\bm{a}$.

\section{The projection onto the simplex}\label{sect:2}
In this section, we review  key results on the projection onto the simplex and apply an efficient algorithm to compute this projection, which is essential for designing efficient algorithms for problem \eqref{1.1}.

Given $\bm{y}\in\mathbb{R}^n$, the projection of $\bm{y}$ onto the set ${\Delta_{n-1}}$, denoted by $\Pi_{\Delta_{n-1}}(\bm{y})$, is given by 
\begin{equation}\label{1.3}
	\Pi_{\Delta_{n-1}}(\bm{y}):=\mathop{\arg\min}\limits_{\bm{x}\in\Delta_{n-1}} \ \frac{1}{2}\|\bm{x}-\bm{y}\|^2.
\end{equation}
Suppose that  $\chi_{\Delta_{n-1}}:\mathbb{R}^n\to(-\infty,+\infty]$ is the indicator function of  the set $\Delta_{n-1}$. Then the Moreau envelope of $\chi_{\Delta_{n-1}}$ is defined by 
\begin{equation}\label{MP}
	M_{\chi_{\Delta_{n-1}}}(\bm{y}):=\mathop{\min}\limits_{\bm{x}\in\mathbb{R}^n} \ \left\{ \frac{1}{2}\|\bm{x}-\bm{y}\|^2+\chi_{\Delta_{n-1}}(\bm{x})\right\}=\frac{1}{2}\|\Pi_{\Delta_{n-1}}(\bm{y})-\bm{y}\|^2,\ \forall \bm{y}\in\mathbb{R}^n.
\end{equation}
The properties   associated with $\Pi_{\Delta_{n-1}}(\cdot)$ and $M_{\chi_{\Delta_{n-1}}}(\cdot)$ are stated in the next proposition (cf.   \cite{Z.E.H.1971}).
\begin{proposition}\label{pro1}
	The following properties hold:
	\begin{itemize}
		\item [(1)] The  projection $\Pi_{\Delta_{n-1}}(\cdot)$ satisfies
		$$
		\|\Pi_{\Delta_{n-1}}(\bm{x})-\Pi_{\Delta_{n-1}}(\bm{y})\|^2\leq \left<\Pi_{\Delta_{n-1}}(\bm{x})-\Pi_{\Delta_{n-1}}(\bm{y}),\bm{x}-\bm{y}\right>, \ \forall \bm{x},\bm{y}\in\mathbb{R}^n.
		$$
		Hence, the  projection $\Pi_{\Delta_{n-1}}(\cdot)$ is globally Lipschitz continuous with modulus $1$.
		\item [(2)]  The Moreau envelope $M_{\chi_{\Delta_{n-1}}}(\cdot)$ is convex, continuously differentiable, and its gradient at $\bm{y}$ is 
		$$
		\nabla M_{\chi_{\Delta_{n-1}}}(\bm{y})=\bm{y}-\Pi_{\Delta_{n-1}}(\bm{y}).
		$$
		Moreover, $\nabla M_{\chi_{\Delta_{n-1}}}(\cdot)$ is globally Lipschitz continuous with modulus $1$.
	\end{itemize}
\end{proposition}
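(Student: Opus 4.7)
The plan is to treat both statements as straightforward instances of the general theory of metric projections onto nonempty closed convex sets, since $\Delta_{n-1}$ is nonempty, closed, and convex. I would begin by recording the variational characterization of the projection: for any $\bm{y}\in\mathbb{R}^n$, the point $\bm{p}=\Pi_{\Delta_{n-1}}(\bm{y})$ is the unique vector in $\Delta_{n-1}$ satisfying $\langle \bm{y}-\bm{p},\bm{z}-\bm{p}\rangle \leq 0$ for every $\bm{z}\in\Delta_{n-1}$. This inequality is the engine driving both parts.

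For part (1), I would apply the variational inequality twice: once at $\bm{y}$ with test point $\bm{z}=\Pi_{\Delta_{n-1}}(\bm{x})$, and once at $\bm{x}$ with test point $\bm{z}=\Pi_{\Delta_{n-1}}(\bm{y})$. Adding the two resulting inequalities and rearranging yields precisely the firm nonexpansiveness bound $\|\Pi_{\Delta_{n-1}}(\bm{x})-\Pi_{\Delta_{n-1}}(\bm{y})\|^{2}\leq \langle \Pi_{\Delta_{n-1}}(\bm{x})-\Pi_{\Delta_{n-1}}(\bm{y}),\bm{x}-\bm{y}\rangle$. Global $1$-Lipschitz continuity then follows by applying the Cauchy--Schwarz inequality to the right-hand side and dividing by $\|\Pi_{\Delta_{n-1}}(\bm{x})-\Pi_{\Delta_{n-1}}(\bm{y})\|$ (with the trivial case handled separately).

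For part (2), I would first note that $M_{\chi_{\Delta_{n-1}}}$ is the infimal convolution of the convex quadratic $\tfrac12\|\cdot\|^2$ with the proper closed convex indicator $\chi_{\Delta_{n-1}}$, hence convex. To obtain the gradient formula, I would either appeal directly to Moreau's theorem on the differentiability of the Moreau envelope of a proper closed convex function, or verify it from scratch by showing $\bm{y}-\Pi_{\Delta_{n-1}}(\bm{y})\in \partial M_{\chi_{\Delta_{n-1}}}(\bm{y})$ and using the variational inequality characterization of $\Pi_{\Delta_{n-1}}$ to deduce the subdifferential is a singleton. For the Lipschitz bound on $\nabla M_{\chi_{\Delta_{n-1}}}$, I would expand
\begin{equation*}
\|\nabla M_{\chi_{\Delta_{n-1}}}(\bm{x})-\nabla M_{\chi_{\Delta_{n-1}}}(\bm{y})\|^{2}=\|(\bm{x}-\bm{y})-(\Pi_{\Delta_{n-1}}(\bm{x})-\Pi_{\Delta_{n-1}}(\bm{y}))\|^{2}
\end{equation*}
and apply part (1) to control the cross term, which leaves $\|\bm{x}-\bm{y}\|^{2}-\|\Pi_{\Delta_{n-1}}(\bm{x})-\Pi_{\Delta_{n-1}}(\bm{y})\|^{2}\leq \|\bm{x}-\bm{y}\|^{2}$.

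There is no genuine obstacle in this proposition; the only care needed is a clean invocation of the variational inequality to make part (1) and the gradient formula in part (2) self-contained, rather than quoting Moreau's envelope theorem as a black box. Once part (1) is in hand, the Lipschitz bound on the gradient is purely algebraic.
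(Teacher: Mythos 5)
Your proof is correct and is the standard argument; the paper itself gives no proof of this proposition, merely citing Zarantonello \cite{Z.E.H.1971}, and your variational-inequality derivation of firm nonexpansiveness, the Moreau envelope gradient formula, and the expansion $\|(\bm{x}-\bm{y})-(\Pi_{\Delta_{n-1}}(\bm{x})-\Pi_{\Delta_{n-1}}(\bm{y}))\|^{2}\leq\|\bm{x}-\bm{y}\|^{2}$ is exactly the argument that reference supplies. Nothing is missing.
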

As studied in \cite{C.L.2016}, we obtain the following important results of $\Pi_{\Delta_{n-1}}(\cdot)$.
\begin{proposition}\label{pro2}
	Let $\bm{y}\in\mathbb{R}^n$ be a given vector. Then there exists a unique $\tau\in\mathbb{R}$ such that
	$$
	\Pi_{\Delta_{n-1}}(\bm{y})=\max(\bm{y}-\tau,0).
	$$
\end{proposition}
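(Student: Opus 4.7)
The plan is to derive the claimed formula from the KKT conditions of the projection problem~\eqref{1.3} and then establish existence and uniqueness of the scalar $\tau$ by analyzing a one-dimensional monotone piecewise linear equation.

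First I would form the Lagrangian of \eqref{1.3}, treating $\bm{e}^{\top}\bm{x}=1$ with multiplier $\tau\in\mathbb{R}$ and $\bm{x}\ge 0$ with multiplier $\bm{\mu}\in\mathbb{R}_+^n$:
\[
L(\bm{x},\tau,\bm{\mu})=\tfrac12\|\bm{x}-\bm{y}\|^2+\tau(\bm{e}^{\top}\bm{x}-1)-\bm{\mu}^{\top}\bm{x}.
\]
Since problem \eqref{1.3} is a strongly convex quadratic program with a Slater point (e.g. $\bm{x}=\bm{e}/n$), the KKT system is necessary and sufficient for optimality and admits a (unique primal) solution. Stationarity gives $\bm{x}=\bm{y}-\tau\bm{e}+\bm{\mu}$, so componentwise $x_i=y_i-\tau+\mu_i$. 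Complementarity $\mu_ix_i=0$ with $x_i,\mu_i\ge 0$ forces: either $x_i>0$, $\mu_i=0$, and $x_i=y_i-\tau$; or $x_i=0$ and $\mu_i=\tau-y_i\ge 0$, i.e.\ $y_i-\tau\le 0$. In both cases $x_i=\max(y_i-\tau,0)$, which is the desired form.

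Next I would establish existence and uniqueness of the multiplier $\tau$ by focusing on the reduced equation coming from primal feasibility:
\[
\varphi(\tau):=\sum_{i=1}^{n}\max(y_i-\tau,0)=1.
\]
The function $\varphi$ is continuous, piecewise linear, and nonincreasing in $\tau$. Moreover $\varphi(\tau)\to+\infty$ as $\tau\to-\infty$ and $\varphi(\tau)=0$ for $\tau\ge\max(\bm{y})$, so by the intermediate value theorem at least one $\tau$ solves $\varphi(\tau)=1$. For uniqueness I would observe that whenever $\varphi(\tau)>0$ there is at least one index $i$ with $y_i-\tau>0$; on a neighborhood of such a point the left derivative of $\varphi$ is strictly negative, so $\varphi$ is strictly decreasing on the set $\{\varphi>0\}$. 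Since the target value $1$ is strictly positive, its preimage is a singleton, giving uniqueness of $\tau$.

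The only step I would expect to require a little care is the strict-monotonicity argument at the solution: $\varphi$ has flat pieces on intervals where every $y_i-\tau$ is nonpositive, but such intervals lie in the region $\{\varphi=0\}$ and so cannot contain the point where $\varphi=1$. Putting the two parts together, the unique $\tau$ solving $\varphi(\tau)=1$ yields the KKT-optimal primal vector $\max(\bm{y}-\tau,0)$, which by uniqueness of the projection equals $\Pi_{\Delta_{n-1}}(\bm{y})$, completing the proof.
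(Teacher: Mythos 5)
Your proof is correct. The paper does not actually prove Proposition \ref{pro2}; it simply cites Condat \cite{C.L.2016}, and the KKT/complementarity argument you give, together with the monotonicity analysis of $\varphi(\tau)=\sum_{i}\max(y_i-\tau,0)$, is precisely the standard derivation underlying that reference (going back to Held, Wolfe and Crowder \cite{H.W.C.1974}). Your handling of the uniqueness of $\tau$ --- noting that the flat pieces of $\varphi$ lie only where $\varphi=0$, so the level set $\{\varphi=1\}$ is a singleton --- correctly closes the one point that needs care.
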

Numerous algorithms \cite{H.W.C.1974,V.F.2008,K.K.C.2008,M.C.1986}  have been proposed for computing the projection onto the simplex. In our numerical experiment, we choose the algorithm proposed by  Condat \cite{C.L.2016} to calculate the projection $\Pi_{\Delta_{n-1}}(\cdot)$. 
\section{Efficient algorithms based on Lagrangian duality  method}\label{sect:3}
In this section, we  present the Lagrangian dual  of problem \eqref{1.1} and design two efficient algorithms based on the Lagrangian dual theory to find an optimal solution of problem \eqref{1.1}.

\subsection{Lagrangian duality method}
Recall that problem \eqref{1.1} can be rewritten as:
\begin{equation}\label{3.1}
	\begin{split}
		\mathop{\min}\limits_{\bm{x}\in\mathbb{R}^n}&  \ \frac{1}{2}\|\bm{x}-\bm{y}\|^2\\
		\mbox{s.t.}\ &\  \bm{a}^{\top}\bm{x}\leq b,\\
		\quad & \ \bm{x}\in\Delta_{n-1}.
	\end{split}
\end{equation}
The corresponding Lagrangian function of problem \eqref{3.1} in the extended form is given by 
$$
L(\bm{x};\sigma):=
\begin{cases}
	\frac{1}{2}\|\bm{x}-\bm{y}\|^2+\sigma(\bm{a}^{\top}\bm{x}-b), & \bm{x}\in\Delta_{n-1},\\ 
	+\infty, & {\rm{otherwise}}.
\end{cases}
$$
The dual of problem \eqref{3.1}  is formulated as follows:
\begin{equation}\label{dual}
	\max\limits_{\sigma\geq 0}\ h(\sigma),
\end{equation}
where the objective function $h(\cdot)$ is defined by 
\begin{equation}\label{dualfun}
		h(\sigma):=\min\limits_{\bm{x}\in\Delta_{n-1}}\{L({\bm{x}};\sigma)\}
		=\min\limits_{\bm{x}\in\Delta_{n-1}}\{\frac{1}{2}\|\bm{x}-(\bm{y}-\sigma \bm{a})\|^2\}-\frac{1}{2}\|\bm{y}-\sigma \bm{a}\|^2+\frac{1}{2}\|\bm{y}\|^2-\sigma b.
\end{equation}
This implies that
$$
	h(\sigma)=M_{{\chi_{\Delta_{n-1}}}}(\bm{y}-\sigma \bm{a})-\frac{1}{2}\|\bm{y}-\sigma \bm{a}\|^2+\frac{1}{2}\|\bm{y}\|^2-\sigma b,\ \forall \sigma\geq 0,
$$
where $M_{{\chi_{\Delta_{n-1}}}}(\cdot) $ is defined  in \eqref{MP}.  It is easy to see that $\Pi_{\Delta_{n-1}}(\bm{y}-\sigma \bm{a})$ is the unique optimal solution to problem \eqref{dualfun}. Assume that $\bm{a}\neq\bm{0}$.  For a fixed $\sigma$, let $\bar{\bm{x}}(\sigma)$ denote  the unique optimal solution to problem \eqref{dualfun}, i.e., 
$	\bar{\bm{x}}(\sigma) = \Pi_{\Delta_{n-1}}(\bm{y}-\sigma \bm{a})$.  
Given $\sigma\geq0$, let $\kappa^{\sigma}$ (depending on $\sigma$) be a permutation of $\{1,\dots,n\} $ such that 
$$
\left[ {\bm{y}}-\sigma {\bm{a}}\right]_{\kappa^{\sigma}(1)}\geq\left[ {\bm{y}}-\sigma {\bm{a}}\right]_{\kappa^{\sigma}(2)}\geq\cdots\geq\left[ {\bm{y}}-\sigma {\bm{a}}\right]_{\kappa^{\sigma}(n)},
$$
where $\left[ {\bm{y}}-\sigma {\bm{a}}\right]_{\kappa^{\sigma}(i)}:=\left[ {\bm{y}}_{\kappa^{\sigma}(i)}-\sigma {\bm{a}}_{\kappa^{\sigma}(i)}\right],\forall i\in\{1,\dots,n\}.$
Now, we obtain the closed-form expression of $\Pi_{\Delta_{n-1}}(\bm{y}-\sigma \bm{a})$ from \cite{H.W.C.1974}. Define 
$$
\bar{K}(\sigma):=\max\left\lbrace 1 \leq j\leq n\mid ({\bm{y}}-\sigma {\bm{a}})_{\kappa^{\sigma}(j)}+\frac{1}{j}\left(  1-\sum_{i=1}^{j}({\bm{y}}-\sigma {\bm{a}})_{\kappa^{\sigma}(i)}\right)>0 \right\rbrace.
$$
 Then, for $i\in\{1,\dots,n\}$, we have 
\begin{equation}\label{xstart}
	\begin{aligned}
			\bar{\bm{x}}_i(\sigma) &= \left[ \Pi_{\Delta_{n-1}}(\bm{y}-\sigma \bm{a})\right]_i\\
			&=\left\{\begin{aligned}
					(\bm{y}-\sigma \bm{a})_i-\frac{\sum_{j=1}^{ \bar{K}(\sigma)}(\bm{y}-\sigma \bm{a})_{\kappa^{\sigma}(j)}-1}{\bar{K}(\sigma)},\  & (\bm{y}-\sigma \bm{a})_i-\frac{\sum_{j=1}^{ \bar{K}(\sigma)}(\bm{y}-\sigma \bm{a})_{\kappa^{\sigma}(j)}-1}{\bar{K}(\sigma)}>0,\\
					0,\ &{\rm{otherwise}}.\end{aligned}\right.
		\end{aligned}
\end{equation}

Now, we state some properties of the objective function $h(\cdot)$,  which provide theoretical basis for designing  efficient algorithms.
\begin{proposition}\label{pro7}
	Assume that $\bm{y},\bm{a}\in\mathbb{R}^n$ and $b\in\mathbb{R}$ are given. Then, the following properties are valid:
	\begin{itemize}
		\item [(1)] The function $h(\cdot)$ of the dual problem \eqref{dual} is coercive, closed, and  concave. Furthermore, the function $h(\cdot)$ is continuously differentiable with its gradient given by
		$$
		h'(\sigma)=\bm{a}^{\top}\Pi_{\Delta_{n-1}}(\bm{y}-\sigma \bm{a})-b.
		$$
		\item [(2)] For a given non-negative number $\sigma$, if $\sigma$ satisfies one of the following conditions:  (i) $\sigma=0$ and $h'(0)\leq0$, or (ii) $h'(\sigma)=0$, then the optimal solution of problem \eqref{dualfun} is the unique optimal solution of problem \eqref{1.1}.
	\end{itemize}
\end{proposition}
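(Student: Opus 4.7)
For part (1), my plan is to read everything off the Moreau-envelope representation
$h(\sigma)=M_{\chi_{\Delta_{n-1}}}(\bm{y}-\sigma\bm{a})-\tfrac{1}{2}\|\bm{y}-\sigma\bm{a}\|^2+\tfrac{1}{2}\|\bm{y}\|^2-\sigma b$
already derived just above the statement. Concavity and closedness (upper semicontinuity) are essentially free: writing $h(\sigma)=\inf_{\bm{x}\in\Delta_{n-1}}L(\bm{x};\sigma)$ exhibits $h$ as the pointwise infimum of a family of affine functions in $\sigma$, so $h$ is concave and upper semicontinuous automatically. For differentiability, Proposition \ref{pro1}(2) gives $\nabla M_{\chi_{\Delta_{n-1}}}(\bm{z})=\bm{z}-\Pi_{\Delta_{n-1}}(\bm{z})$ and $M_{\chi_{\Delta_{n-1}}}\in C^{1}$, so $h$ is $C^1$ in $\sigma$. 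Differentiating term by term with the chain rule, the contribution $-\bm{a}^{\top}(\bm{y}-\sigma\bm{a})$ from $\nabla M_{\chi_{\Delta_{n-1}}}$ cancels exactly against $-\tfrac{d}{d\sigma}[\tfrac{1}{2}\|\bm{y}-\sigma\bm{a}\|^2]=\bm{a}^{\top}(\bm{y}-\sigma\bm{a})$, leaving precisely $h'(\sigma)=\bm{a}^{\top}\Pi_{\Delta_{n-1}}(\bm{y}-\sigma\bm{a})-b$.

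For coercivity (i.e.\ $h(\sigma)\to-\infty$ as $\sigma\to+\infty$), I would use that $\Delta_{n-1}$ is compact, so $\tfrac{1}{2}\|\bm{x}-\bm{y}\|^2$ is uniformly bounded on it. Picking a vertex $\bm{e}_{j^{\ast}}$ with $\bm{a}_{j^{\ast}}=\max(-\bm{a})\cdot(-1)=\min_{i}\bm{a}_{i}$ and upper-bounding by $L(\bm{e}_{j^{\ast}};\sigma)$ yields $h(\sigma)\le\tfrac{1}{2}\|\bm{e}_{j^{\ast}}-\bm{y}\|^{2}+\sigma(\min_{i}\bm{a}_{i}-b)$, which is driven to $-\infty$ under the natural (Slater) nondegeneracy condition $\min_i\bm{a}_i<b$; in the complementary regimes the problem is either trivially solvable or infeasible, so this covers the meaningful case.

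For part (2), the plan is to verify the KKT system of problem \eqref{1.1} at the candidate $\bar{\bm{x}}(\sigma)=\Pi_{\Delta_{n-1}}(\bm{y}-\sigma\bm{a})$ using the KKT system of the simplex projection. Since $\bar{\bm{x}}(\sigma)$ minimizes $\tfrac{1}{2}\|\bm{x}-(\bm{y}-\sigma\bm{a})\|^2$ over $\Delta_{n-1}$, there exist $\mu\in\mathbb{R}$ and $\bm{\lambda}\geq\bm{0}$ with
\[
\bar{\bm{x}}(\sigma)-\bm{y}+\sigma\bm{a}+\mu\bm{e}-\bm{\lambda}=\bm{0},\qquad \bm{e}^{\top}\bar{\bm{x}}(\sigma)=1,\qquad \bar{\bm{x}}(\sigma)\ge\bm{0},\qquad \bm{\lambda}^{\top}\bar{\bm{x}}(\sigma)=0.
\]
Reading these as the stationarity, equality, nonnegativity, and complementary-slackness conditions for \eqref{1.1} with inequality multiplier $\sigma\geq 0$, the only missing pieces are $\bm{a}^{\top}\bar{\bm{x}}(\sigma)\leq b$ and $\sigma(\bm{a}^{\top}\bar{\bm{x}}(\sigma)-b)=0$. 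Case (i) supplies them because $\sigma=0$ trivially kills complementary slackness and $h'(0)\leq 0$ is exactly $\bm{a}^{\top}\bar{\bm{x}}(0)-b\leq 0$; case (ii) supplies them because $h'(\sigma)=0$ forces $\bm{a}^{\top}\bar{\bm{x}}(\sigma)=b$. The primal being a strictly convex quadratic program then gives sufficiency of KKT and uniqueness of the optimizer. The main obstacle in the whole argument is really just this translation step — identifying the simplex multipliers with the KKT multipliers for the augmented constraint set and matching the two cases against the complementary-slackness/feasibility pair; everything else reduces to the Moreau-envelope calculus and compactness.
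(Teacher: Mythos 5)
Your proposal is correct, and it supplies in full the details that the paper omits (the paper's ``proof'' of Proposition~3.1 is only a citation to Rockafellar, Graves, and Liu--Liu). The concavity/closedness argument via infimum of affine functions, the Moreau-envelope differentiation with the cancellation of the $\bm{a}^{\top}(\bm{y}-\sigma\bm{a})$ terms, and the KKT translation in part (2) (reading the simplex-projection multipliers as the stationarity data for \eqref{1.1} with $\sigma$ as the inequality multiplier, then checking feasibility and complementary slackness in the two cases) are exactly the standard route the cited references take. One point where you are actually more careful than the paper: coercivity of $h$ genuinely fails in the degenerate case $\min_i\bm{a}_i=b$ (there $h(\sigma)$ tends to a finite limit as $\sigma\to+\infty$), so your restriction to the Slater regime $\min_i\bm{a}_i<b$ is not a gap in your argument but a hypothesis the paper leaves implicit; it resurfaces later in the paper's own Theorem~3.8, which has to assume a nondegeneracy condition to get boundedness of the SSN iterates.
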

\begin{proof}
	The proof of item (1) and  (2) follow from \cite{RR.T.1974,GLM.1950} and  \cite{L.L.2017}, respectively. Here, we omit the details.
\end{proof}

For convenience, we define the function $\psi:\mathbb{R}_{+}\to\mathbb{R}$ by 
\begin{equation}\label{phi}
	\psi(\sigma):=h'(\sigma)=\bm{a}^{\top}\Pi_{\Delta_{n-1}}(\bm{y}-\sigma \bm{a})-b.
\end{equation}
From Proposition \ref{pro7}, we identify that the key to solving problem \eqref{1.1} is to determine $\sigma^*\in\mathbb{R}_{+}$ that satisfies either (i) $\sigma=0$ and $\psi(0)\leq0$, or (ii) $\psi(\sigma)=0$. These conditions are equivalent to $\sigma\psi(\sigma)=0$ with $\sigma\geq0.$ In fact, $\sigma^*\psi(\sigma^*)=0,\sigma^*\geq0$ and $x^* = \Pi_{\Delta_{n-1}}(y-\sigma^*a)$  are the KKT conditions at the optimal primal-dual pairs $(x^*,\sigma^*)$. Moreover,  we  present a useful property of $\psi$ for the sake of subsequent analysis.

\begin{proposition}\label{pro4}
Let the function $\psi$ be defined by \eqref{phi}. Then, $\psi$ is a continuous and monotonically nonincreasing function on $\mathbb{R}_{+}$.
\end{proposition}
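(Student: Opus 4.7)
The plan is to leverage directly the firm-nonexpansiveness property of the projection onto $\Delta_{n-1}$ that was recorded in Proposition~\ref{pro1}(1), together with the fact that $\psi$ is the composition of the affine map $\sigma \mapsto \bm{y}-\sigma\bm{a}$, the projection, and the linear functional $\bm{a}^{\top}(\cdot) - b$.

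For continuity, I would note that the affine map $\sigma \mapsto \bm{y}-\sigma\bm{a}$ is continuous (in fact Lipschitz with constant $\|\bm{a}\|$), that $\Pi_{\Delta_{n-1}}(\cdot)$ is globally Lipschitz with modulus $1$ by Proposition~\ref{pro1}(1), and that $\bm{z}\mapsto \bm{a}^{\top}\bm{z}-b$ is continuous. Composing these three maps yields continuity of $\psi$ on $\mathbb{R}_{+}$; in fact one obtains Lipschitz continuity with constant $\|\bm{a}\|^{2}$, which is more than the statement requires.

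For the monotonicity claim, I would pick any $0\le\sigma_{1}\le\sigma_{2}$ and set $\bm{u}:=\bm{y}-\sigma_{1}\bm{a}$, $\bm{v}:=\bm{y}-\sigma_{2}\bm{a}$, so that $\bm{u}-\bm{v}=(\sigma_{2}-\sigma_{1})\bm{a}$. Applying Proposition~\ref{pro1}(1) to $\bm{u},\bm{v}$ gives
\[
0 \;\le\; \|\Pi_{\Delta_{n-1}}(\bm{u})-\Pi_{\Delta_{n-1}}(\bm{v})\|^{2}\;\le\;\bigl\langle \Pi_{\Delta_{n-1}}(\bm{u})-\Pi_{\Delta_{n-1}}(\bm{v}),\,(\sigma_{2}-\sigma_{1})\bm{a}\bigr\rangle.
\]
Dividing by $(\sigma_{2}-\sigma_{1})>0$ (the case $\sigma_{1}=\sigma_{2}$ is trivial) yields $\bm{a}^{\top}\Pi_{\Delta_{n-1}}(\bm{u})\ge \bm{a}^{\top}\Pi_{\Delta_{n-1}}(\bm{v})$, whence $\psi(\sigma_{1})\ge\psi(\sigma_{2})$ after subtracting $b$ from both sides.

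There is no real obstacle here: the firm-nonexpansiveness inequality from Proposition~\ref{pro1}(1) does all the work, and the direction $\bm{u}-\bm{v}=(\sigma_{2}-\sigma_{1})\bm{a}$ aligns perfectly with the test vector $\bm{a}$ appearing in $\psi$, so the inner product on the right-hand side is exactly what we want to control. The only minor care point is to handle the degenerate case $\sigma_{1}=\sigma_{2}$ separately so as to avoid dividing by zero, which is immediate.
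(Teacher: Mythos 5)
Your proof is correct and complete. The paper itself only sketches this result---obtaining continuity from the global Lipschitz continuity of $\Pi_{\Delta_{n-1}}(\cdot)$ exactly as you do, and deferring the monotonicity argument to \cite{L.L.2017}---and your derivation of $\psi(\sigma_1)\ge\psi(\sigma_2)$ from the firm-nonexpansiveness inequality of Proposition~\ref{pro1}(1) applied to $\bm{u}-\bm{v}=(\sigma_2-\sigma_1)\bm{a}$ is precisely the standard argument that fills in those omitted details.
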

\begin{proof}
Since $\Pi_{\Delta_{n-1}}(\cdot)$ is a globally Lipschitz continuous function, it follows that $\psi$ is   continuous.   The rest of the proof follows from \cite{L.L.2017}, thus we omit the details here.
\end{proof}

In view of the above analysis, our primary goal is to design efficient algorithms for solving the equation $\psi(\sigma)=0$ to obtain an optimal solution of the projection problem \eqref{1.1}. For the monotonically nonincreasing univariate function $\psi$, we can adopt secant method to find the  solution of $\psi(\sigma)=0$. On the other hand, since $\psi$ is not differentiable, the classical Newton method is not suitable for solving $\psi(\sigma)=0$. Considering that $\psi$ is strongly semismooth,  we  also attempt to  solve the nonsmooth equation $\psi(\sigma)=0$  using the semismooth Newton method \cite{Q.S.1993}. The details of these two algorithms will be presented in the next two subsections.

	\subsection{An algorithm based on Lagrangian duality approach and secant method}
	In this subsection, we apply an algorithm based on the Lagrangian duality approach and secant method (LRSA) \cite{L.L.2017} to find the projection $\Pi_{\mathcal{C}}(\cdot)$.  In Algorithm \ref{al:1}, given $\bm{y}\in\mathbb{R}^n$, we compute $\psi(0)=\bm{a}^{\top}\Pi_{\Delta_{n-1}}(\bm{y})-b$ and evaluate its value.
	If $\psi(0)\leq0$,  then $\Pi_{\mathcal{C}}(\bm{y}) = \Pi_{\Delta_{n-1}}(\bm{y});$ otherwise, we need to solve the equation $\psi(\sigma)=0$. The procedure of solving the equation $\psi(\sigma)=0$ is divided into two steps. In the first step, the bracketing phase method is used to find an interval $[\sigma_l,\sigma_u]$ that contains a root of the equation $\psi(\sigma)=0$. 
	In the second step, the secant method is applied to find the approximate solution  $\hat{\sigma}$ of the equation $\psi(\sigma)=0$ within the interval $[\sigma_l,\sigma_u]$. Finally, we obtain the projection $\Pi_{\mathcal{C}}(\bm{y}) = \Pi_{\Delta_{n-1}}(\bm{y}-\hat{\sigma}\bm{a}).$
	
	\begin{algorithm}[h]
		\caption{An algorithm based on Lagrangian duality approach and secant method (LRSA)  for  $\Pi_{\mathcal{C}}(\cdot)$}\label{al:1}	
		\begin{algorithmic}[1]
			\Require Given the vector $\bm{y}\in\mathbb{R}^n$, the parameters $\rho>1,\Delta\sigma>0$, and tolerance $\epsilon>0$.
			\State {\bf{Initialization Begins:}} Compute $\Pi_{\Delta_{n-1}}(\bm{y})$ and $r=\psi(0)$.
			\If{$r\leq0$}
			\State stop, and $\Pi_{\mathcal{C}}(\bm{y}) = \Pi_{\Delta_{n-1}}(\bm{y}).$ 
			\Else
			\State set $\sigma_l=0,r_l = r$.
			\EndIf
			\State{\bf{Bracketing Phase Begins:}}
			\For{$j = 0,1,\ldots$}
			\State Set $\sigma=\rho^j\Delta\sigma$, compute $\Pi_{\Delta_{n-1}}(\bm{y}-\sigma \bm{a})$ and $r = \psi(\sigma)$.
			\If{$r=0$}
			\State stop,  and $\Pi_{\mathcal{C}}(\bm{y}) = \Pi_{\Delta_{n-1}}(\bm{y}-\sigma \bm{a})$.
			\ElsIf{$r<0$}
			\State set $\sigma_u=\sigma, r_u=r,$ and go to step 18.
			\ElsIf{$r>0$}
			\State set $\sigma_l=\sigma, r_l=r$ and $j = j+1$.
			\EndIf
			\EndFor
			\State{\bf{Secant Phase Begins:}} 
			The approximate solution $\hat{\sigma}\in[\sigma_l,\sigma_u]$ for $\psi(\sigma)=0$ can be obtained by Algorithm \ref{al:2}. Then, $\Pi_{\mathcal{C}}(\bm{y}) = \Pi_{\Delta_{n-1}}(\bm{y}-\hat{\sigma} \bm{a}).$
		\end{algorithmic}
	\end{algorithm}
	\begin{algorithm}[H]
		\caption{A modified secant algorithm for  $\psi(\sigma)=0$}\label{al:2}	
		\begin{algorithmic}[1]
			\Require Given the initial value $\sigma_l, \sigma_u>0$ with $\psi(\sigma_l)>0, \psi(\sigma_u)<0$ and tolerance $\epsilon>0$.
			\State Set ${r}_l = \psi(\sigma_l), r_u = \psi(\sigma_u)$, $s = 1-r_l/r_u, \sigma = \sigma_u-(\sigma_u-\sigma_l)/s$.  Compute $\Pi_{\Delta_{n-1}}(\bm{y}-\sigma \bm{a})$ and $\psi(\sigma)$, set $r = \psi(\sigma)$.
			\While{$|r|>\epsilon$} 
			\If{$r<0$}
			\If{$s\leq2$}
			\State update $\sigma_u=\sigma, r_u=r, s= 1-r_l/r_u, \sigma=\sigma_u-(\sigma_u-\sigma_l)/s$;
			\Else
			\State update $s =\max(r_u/r-1,0.1), \Delta\sigma=(\sigma_u-\sigma)/s,\sigma_u=\sigma,r_u=r$,
			\State	\quad \quad\quad\  $\sigma=\max(\sigma_u-\Delta \sigma,0.6\sigma_l+0.4\sigma_u), s = (\sigma_u-\sigma_l)/(\sigma_u-\sigma). $
			\EndIf
			\Else
			\If{$s\geq2$}
			\State update $\sigma_l=\sigma, r_l=r, s = 1-r_l/r_u,\sigma=\sigma_u-(\sigma_u-\sigma_l)/s.$
			\Else 
			\State update $s = \max(r_l/r-1,0.1), \Delta\sigma =(\sigma-\sigma_l)/s,\sigma_l=\sigma,r_l = r,$
			\State \quad \quad\quad\ $\sigma = \min(\sigma_l+\Delta\sigma,0.6\sigma_u+0.4\sigma_l),s = (\sigma_u-\sigma_l)/(\sigma_u-\sigma).$
			\EndIf
			\EndIf
			\State Compute $\Pi_{\Delta_{n-1}}(\bm{y}-\sigma \bm{a})$ and $\psi(\sigma)$, set $r = \psi(\sigma).$
			\EndWhile\\
			\Return The approximate solution $\hat{\sigma}:=\sigma.$
		\end{algorithmic}
	\end{algorithm}
	In particular, for the second phase of Algorithm \ref{al:1}, we refer to the tailored secant algorithm proposed by Dai and Fletcher \cite{D.F.2006}  for solving  $\psi(\sigma)=0$. The algorithmic framework of  modified secant algorithm is outlined in Algorithm \ref{al:2}. We briefly describe the procedure of Algorithm \ref{al:2}. Based on the initial points $\sigma_l$ and $\sigma_u$ with $\psi(\sigma_l)>0$ and $\psi(\sigma_u)<0$, a new iterative point $\sigma$ is generated by secant method.  
   Now, let us analyze the case where  $r<0$. If $s\leq2$, i.e.,  the interval length of $\left[\sigma_l,\sigma\right]$ is less than $\frac{1}{2}(\sigma_u-\sigma_l)$, then the next iteration proceeds with a secant step using $\sigma_l$ and $\sigma$ as the basis.
  If  $s\geq2$, i.e., the interval length of $\left[\sigma_l,\sigma\right]$ is greater than $\frac{1}{2}(\sigma_u-\sigma_l)$, then  either a secant step based on  $\sigma$ and $\sigma_u$, or a step to the point $0.6\sigma_l+0.4\sigma$ is taken, whichever is the smaller. The modifications in Steps 7 and 8 accelerate the global convergence of the algorithm by shortening the length of the interval  $\left[\sigma_l,\sigma\right]$ by a factor of $0.6$ or less. A similar approach is employed for the case where $r>0$.
	
	Next, we present the convergence results of  Algorithm \ref{al:2} developed in \cite{RR.T.1998, P.Q.S.1998, L.X.L.2022}.
	\begin{theorem}\label{globalc}
		(Global convergence) Suppose that problem \eqref{1.1} is feasible. Let $\{\sigma_i\}$ be the infinite sequence generated by Algorithm \ref{al:2}. Then, $\{\sigma_i\}$ converges to a unique zero point $\sigma^*$ of $\psi$.
	\end{theorem}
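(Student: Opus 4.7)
The plan is to reduce the global convergence of Algorithm \ref{al:2} to a general convergence principle for bracketed safeguarded secant methods, exploiting the structural properties of $\psi$ already established in Propositions \ref{pro7} and \ref{pro4}. The essential ingredients I would verify are: (i) the bracketing invariant $\psi(\sigma_l)>0,\ \psi(\sigma_u)<0$ is preserved at every iteration, (ii) the bracket width $\sigma_u-\sigma_l$ contracts geometrically, and (iii) the common limit must satisfy $\psi(\sigma^*)=0$.

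First I would verify the bracketing invariant. Since $\psi$ is continuous and nonincreasing on $\mathbb{R}_+$ by Proposition \ref{pro4}, the Bracketing Phase of Algorithm \ref{al:1} delivers initial endpoints with $\psi(\sigma_l)>0$ and $\psi(\sigma_u)<0$, and any zero of $\psi$ must lie in $[\sigma_l,\sigma_u]$. A case analysis on the sign of $r=\psi(\sigma)$ in Algorithm \ref{al:2} then shows that each update preserves this sign configuration: in the branch $r<0$ we replace $\sigma_u$ by $\sigma$ (in either sub-case of $s\le 2$ or $s>2$), and symmetrically in the branch $r>0$ we replace $\sigma_l$. Hence the updated bracket continues to contain a zero of $\psi$, so the iterates $\sigma$ remain in a compact interval and $\{\sigma_i\}$ has accumulation points.

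Next I would establish geometric contraction, which is where the tailored safeguard becomes essential. A pure secant step may leave the bracket almost unchanged when $\psi$ is very flat on one side of the root; the modifications in Steps 7--8 and 13--14 prevent this. When $s>2$ in the $r<0$ branch, the relative position $(\sigma_u-\sigma)/(\sigma_u-\sigma_l)$ would be less than $1/2$, so the plain secant update would shave off less than half of the bracket from the $\sigma_l$ side; the algorithm instead forces the new right endpoint to lie no further than $0.6\sigma_l+0.4\sigma_u$, which guarantees $\sigma_u^{\text{new}}-\sigma_l^{\text{new}}\le 0.6(\sigma_u-\sigma_l)$. Combined with the ``good'' case $s\le 2$, where the secant step itself already lies in the majority-length half and a second iteration produces a contraction, one obtains a uniform bound $\sigma_u^{(k+2)}-\sigma_l^{(k+2)}\le c\,(\sigma_u^{(k)}-\sigma_l^{(k)})$ with $c<1$. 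This is the only real obstacle; the argument is a careful but elementary interval-length bookkeeping following the template of Dai--Fletcher \cite{D.F.2006} and the convergence analyses in \cite{RR.T.1998,P.Q.S.1998,L.X.L.2022}.

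Finally, because $\sigma_u-\sigma_l\to 0$ and both endpoints lie in a fixed compact interval, $\sigma_l^{(k)}$ and $\sigma_u^{(k)}$ share a common limit $\sigma^*$, and by the sandwich property so does $\{\sigma_i\}$. Continuity of $\psi$ with $\psi(\sigma_l^{(k)})>0\ge\psi(\sigma_u^{(k)})$ forces $\psi(\sigma^*)=0$. Uniqueness of $\sigma^*$ follows from the feasibility hypothesis of \eqref{1.1} together with Proposition \ref{pro7}(2): any zero of $\psi$ yields the unique projection $\Pi_{\mathcal{C}}(\bm y)=\Pi_{\Delta_{n-1}}(\bm y-\sigma^*\bm a)$, and if $\psi$ vanished on a nontrivial interval then $\Pi_{\Delta_{n-1}}(\bm y-\sigma\bm a)$ would give two distinct projections, contradicting uniqueness of $\Pi_{\mathcal{C}}(\bm y)$. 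At this point the proof matches the references cited, so I would simply invoke them, as the authors do, to keep the exposition concise.
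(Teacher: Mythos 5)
Your overall architecture (bracketing invariant, shrinking bracket, continuity at the limit, deferral to \cite{D.F.2006,P.Q.S.1998,RR.T.1998}) is the standard one and is consistent with the paper, which in fact gives no proof of Theorem \ref{globalc} beyond citing those references. However, the step you yourself identify as ``the only real obstacle'' is exactly where your argument fails. The asserted uniform contraction $\sigma_u^{(k+2)}-\sigma_l^{(k+2)}\le c\,(\sigma_u^{(k)}-\sigma_l^{(k)})$ with $c<1$ is not guaranteed by Algorithm \ref{al:2}. In the branch $r<0$, $s>2$, the safeguard $\sigma=\max(\sigma_u-\Delta\sigma,\ 0.6\sigma_l+0.4\sigma_u)$ is a \emph{lower} bound on the new trial point: it prevents the step from travelling further than $60\%$ of the way toward $\sigma_l$, but it imposes no minimum step length. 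If the secant extrapolation returns a tiny $\Delta\sigma$ (which happens whenever $|r|\ll|r_u|$), the trial point sits just to the left of the new $\sigma_u$; if $\psi$ there is again negative with $s>2$, the bracket shrinks by an arbitrarily small amount, and this can repeat for many consecutive iterations, so no fixed number of steps yields a fixed contraction factor. The mechanism that actually rescues global convergence in \cite{D.F.2006} is different: if the right endpoints were to stagnate at a point where $\psi<0$, then $r_u/r\to 1$ forces the updated $s$ down to its floor $0.1$, the proposed steps $\Delta\sigma=(\sigma_u-\sigma)/s$ then grow geometrically until the $0.6\sigma_l+0.4\sigma_u$ cap is activated, and that produces a macroscopic reduction of the bracket (by a factor of $0.4$ or $0.6$ depending on the sign of $\psi$ at the capped point), a contradiction. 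Your sketch needs this argument (or an equivalent one); pure interval-length bookkeeping does not close the case.

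The uniqueness argument at the end is also not valid. If $\psi$ vanishes on a nontrivial interval, then by \eqref{rde1} the Cauchy--Schwarz equality case forces $\bm{a}$ to be constant on the support $\gamma_1(\sigma)$ there, and in that situation the $\sigma$-dependence cancels in \eqref{xstart}, so $\Pi_{\Delta_{n-1}}(\bm{y}-\sigma\bm{a})$ is the \emph{same} point for every $\sigma$ in that interval. You therefore do not obtain two distinct projections, and no contradiction with the uniqueness of $\Pi_{\mathcal{C}}(\bm{y})$ arises; a nonunique zero of $\psi$ is genuinely possible in degenerate instances (compare the paper's discussion of Example \ref{ex2}). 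What can legitimately be concluded is that the iterates have a single limit $\sigma^*$ and that $\psi(\sigma^*)=0$, which by Proposition \ref{pro7}(2) suffices to recover the (unique) projection; if you want to keep the word ``unique'' attached to $\sigma^*$ itself, it must either be read as uniqueness of the limit or be supported by an additional nondegeneracy hypothesis.
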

	\begin{theorem}\label{localc}
		(Local convergence rate) Let $\{\sigma_i\}$ be the infinite sequence generated by Algorithm \ref{al:2}. Denote $\sigma^*$ as the zero point of $\psi$. Then, $\{\sigma_i\}$ is $3$-step Q-superlinear convergent to $\sigma^*$ in the sense that
		$$
		|\sigma_{i+3}-\sigma^*|=o(|\sigma_i-\sigma^*|).
		$$
	\end{theorem}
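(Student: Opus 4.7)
The plan is to reduce Theorem \ref{localc} to the general local convergence theory for the modified secant method applied to continuous, piecewise-affine, monotone univariate equations, as developed in \cite{RR.T.1998,P.Q.S.1998,L.X.L.2022}. To apply that theory to the present $\psi$ I would first verify its structural hypotheses. From the closed-form expression \eqref{xstart}, $\Pi_{\Delta_{n-1}}(\bm{y}-\sigma\bm{a})$ depends on $\sigma$ only through the sorting permutation $\kappa^{\sigma}$ and the active-set index $\bar{K}(\sigma)$; each of these can change only at finitely many values of $\sigma\in\mathbb{R}_+$, so $\psi(\sigma)=\bm{a}^{\top}\Pi_{\Delta_{n-1}}(\bm{y}-\sigma\bm{a})-b$ is piecewise affine with finitely many breakpoints. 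Together with Proposition \ref{pro4}, this says that $\psi$ is a continuous, monotonically nonincreasing, piecewise-affine function on $\mathbb{R}_+$.

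By Theorem \ref{globalc}, the iterates $\sigma_i$ eventually enter a neighborhood $N$ of $\sigma^*$ containing no breakpoints of $\psi$ other than possibly $\sigma^*$ itself. This splits the analysis into two scenarios. If $\sigma^*$ lies in the interior of an affine piece, then as soon as both bracketing endpoints $\sigma_l,\sigma_u$ enter $N$ they sit on a common affine segment, and the very next secant step returns $\sigma^*$ exactly, so $|\sigma_{i+3}-\sigma^*|=0$ and the claim is trivial. If instead $\sigma^*$ is a breakpoint, then $\psi$ has distinct one-sided slopes $\psi'(\sigma^{*}\!-)$ and $\psi'(\sigma^{*}\!+)$, both finite and (since $\bm{a}\neq\bm{0}$) not identically zero, and the bracket $[\sigma_l,\sigma_u]$ straddles $\sigma^*$. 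I would then track how the conditional updates in Algorithm \ref{al:2}, governed by the test $s\leq 2$ vs.\ $s\geq 2$ together with the safeguards $0.6\sigma_l+0.4\sigma_u$ and $0.6\sigma_u+0.4\sigma_l$, contract the bracket from step to step.

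The two-slope breakpoint case is the main obstacle. Applied naively, the secant method at a true kink of $\psi$ can stall with only a linear rate when iterates repeatedly land on the same side of $\sigma^*$. The Dai--Fletcher modification is designed precisely to repair this: whenever the previous secant step fails to contract the bracket by at least a factor of two (detected by the $s$-test), the safeguarded branch forces a step of fixed fractional length inside $[\sigma_l,\sigma_u]$. Over any three consecutive iterations I would show (i) at least one endpoint is replaced by a point whose distance to $\sigma^*$ is a vanishing fraction of $|\sigma_i-\sigma^*|$, and (ii) the slope ratio $r_l/r_u$ entering the next secant formula stabilises at the ratio of the two one-sided slopes of $\psi$ at $\sigma^*$. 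Combining (i) and (ii) yields the bound $|\sigma_{i+3}-\sigma^{*}|=o(|\sigma_i-\sigma^{*}|)$.

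The delicate part is the bookkeeping across three consecutive iterations: one has to enumerate the possible sign patterns of $\psi(\sigma_i),\psi(\sigma_{i+1}),\psi(\sigma_{i+2})$, show that in every pattern at least one of the safeguarded or unsafeguarded branches triggers the contraction in (i), and check that none of the branches destroys the slope stabilisation in (ii). Once that case analysis is in place, the 3-step Q-superlinear estimate of \cite{L.X.L.2022} transfers directly to our piecewise linear $\psi$, completing the proof.
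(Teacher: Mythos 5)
Your proposal follows essentially the same route as the paper: the paper gives no self-contained proof of Theorem \ref{localc} and simply invokes the 3-step Q-superlinear convergence theory of the modified secant method for semismooth/piecewise-affine equations from \cite{RR.T.1998,P.Q.S.1998,L.X.L.2022}, which is exactly the theory you reduce to after verifying that $\psi$ is continuous, monotonically nonincreasing and piecewise affine with finitely many breakpoints. One small caution: the nonvanishing of the one-sided slopes of $\psi$ at $\sigma^*$ does not follow from $\bm{a}\neq\bm{0}$ alone (compare the proof of Theorem \ref{thm:1}, which needs $\bm{a}\neq b\bm{e}$ to exclude a zero element of $\partial\psi(\sigma^*)$); it is better justified by the uniqueness of the zero point assumed in Theorem \ref{globalc}, since a vanishing one-sided slope would make $\psi$ identically zero on an adjacent segment.
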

	
	\begin{remark}
		In Algorithm \ref{al:1}, the bisection method can also be used to solve  $\psi(\sigma)=0$. The algorithm that combines the bisection method with the bracketing phase  to find the projection $\Pi_{\mathcal{C}}(\cdot)$ is called PBA. From the existing literature \cite{L.L.2017}, we  found that the secant method is more efficient than the bisection method in calculating some projections. We have  tried to apply PBA to  compute $\Pi_{\mathcal{C}}(\cdot)$ and found that  numerical performance of LRSA is superior to that of PBA in practice. Therefore,  we did not consider using the bisection method to solve $\psi(\sigma)=0$ in this paper.
	\end{remark}
	
	\subsection{An algorithm based on   semismooth Newton method}\label{sect:4}
	Since the function $\psi$ is piecewise affine,   $\psi$ is strongly semismooth. Therefore, we consider developing an algorithm based on the semismooth Newton method for problem \eqref{1.1} from the perspective of Lagrangian duality, in which the semismooth Newton method is utilized to solve the equation $\psi(\sigma)=0$.  Furthermore, we show the  convergence results for the  semismooth Newton method.
	\subsubsection{The generalized differential}
	In this subsection, we characterize the generalized differential of the function  $\psi$, which is an  important ingredient in semismooth Newton method.
	
	Recalling that the function $\psi(\cdot)$ is globally Lipschitz continuous on $\mathbb{R}_{+}$, by virtue of  Rademacher's Theorem,  one knows that $\psi(\cdot)$ is almost everywhere Fr{\'e}chet-differentiable. We define the generalized Clarke's differential of $\psi(\cdot)$ as follows:
	$$
	\partial \psi(\sigma):={\rm{conv}}\left(\left\{\lim\limits_{k\to\infty}\psi'(\sigma_k):\sigma_k\to \sigma\ {\rm{such\ that\ }} \psi'(\sigma_k) \ {\rm{is \ well \ defined}} \right\}\right),
	$$
	where `conv' denotes the convex hull. 
	
	Denote $\psi'_{+}, \psi'_{-}$ as the right and left derivatives of $\psi(\cdot)$ at  $\sigma>0$ respectively, i.e.,
	\begin{equation}\label{def}
		\psi'_{+}(\sigma):=\lim\limits_{t\downarrow 0}\frac{\psi(\sigma+t)-\psi(\sigma)}{t},\ 
		\psi'_{-}(\sigma):=\lim\limits_{t\uparrow 0}\frac{\psi(\sigma+t)-\psi(\sigma)}{t}.
	\end{equation}
	Then, it follows  from \cite{C.F.H.1983} that the generalized Clarke's differential of $\psi(\cdot)$ at any given $\sigma>0$ is characterized by
	\begin{equation}\label{psi}
		\partial \psi(\sigma)=\{\alpha \psi'_{+}(\sigma)+(1-\alpha)\psi'_{-}(\sigma)\ | \ \alpha\in\left[0,1\right] \}.
	\end{equation}
	To obtain explicit expressions for $\partial\psi(\sigma)$, we define the following three index subsets of $\{1,\dots,n\}$:
	\begin{equation*}
		\begin{aligned}
			\gamma_1(\sigma):&=\left\{i \mid (\bm{y}-\sigma \bm{a})_i-\frac{\sum_{j=1}^{ \bar{K}(\sigma)}(\bm{y}-\sigma \bm{a})_{\kappa^{\sigma}(j)}-1}{\bar{K}(\sigma)}>0\right\},\\
			\gamma_2(\sigma):&=\left\{i \mid (\bm{y}-\sigma \bm{a})_i-\frac{\sum_{j=1}^{\bar{K}(\sigma)}(\bm{y}-\sigma \bm{a})_{\kappa^{\sigma}(j)}-1}{\bar{K}(\sigma)}=0\right\},\\
			\gamma_3(\sigma):&=\left\{i \mid (\bm{y}-\sigma \bm{a})_i-\frac{\sum_{j=1}^{ \bar{K}(\sigma)}(\bm{y}-\sigma \bm{a})_{\kappa^{\sigma}(j)}-1}{\bar{K}(\sigma)}<0\right\}.
		\end{aligned}
	\end{equation*}
	Combining the definition of $\psi'_{+}$ and $\psi'_{-}$ with \eqref{xstart}, we discuss the following two cases:
	\begin{itemize}
		\item [(1)]
	 If $\gamma_2(\sigma)=\emptyset$, then the right derivative of $\psi$ is computed by
	\begin{equation}
	\begin{aligned}
		\psi'_{+}(\sigma)&=\lim\limits_{t\downarrow0}\frac{\sum_{i=1}^{n}{\bm{a}}_i\left[ \bar{\bm{x}}_i(\sigma+t)-\bar{\bm{x}}_i(\sigma)\right]}{t}\\
		&=\lim\limits_{t\downarrow0}\frac{\sum_{i\in\gamma_1(\sigma)}{\bm{a}}_i\left[ \bar{\bm{x}}_i(\sigma+t)-\bar{\bm{x}}_i(\sigma)\right]}{t}\\
		& = \lim\limits_{t\downarrow0}\frac{\sum_{i\in\gamma_1(\sigma)}{\bm{a}}_i\left[	-t\bm{a}_i-\frac{\sum_{j=1}^{ \bar{K}(\sigma+t)}(\bm{y}-(\sigma+t)\bm{a})_{\kappa^{\sigma}(j)}-1}{\bar{K}(\sigma+t)}+\frac{\sum_{j=1}^{ \bar{K}(\sigma)}(\bm{y}-\sigma \bm{a})_{\kappa^{\sigma}(j)}-1}{\bar{K}(\sigma)}\right]}{t}.
	\end{aligned}
\end{equation}
Since 
\begin{equation}\nonumber
\bar{K}(\sigma+t)=\max\left\lbrace 1 \leq j\leq n\mid ({\bm{y}}-(\sigma+t) {\bm{a}})_{\kappa^{\sigma}(j)}+\frac{1}{j}\left(  1-\sum_{i=1}^{j}({\bm{y}}-(\sigma+t) {\bm{a}})_{\kappa^{\sigma}(i)}\right)>0 \right\rbrace,
\end{equation}
 there always exists a sufficiently small $t$ such that $\bar{K}(\sigma+t)=\bar{K}(\sigma)$. If not, we assume that $\bar{K}(\sigma+t)\neq\bar{K}(\sigma)$ for the sufficiently small $t$. Without loss of generality, we discuss the following two cases:
 \begin{itemize}
 	\item [(i)] If $\bar{K}(\sigma+t)=\bar{K}(\sigma)+1>\bar{K}(\sigma),$ then 
\begin{equation}\label{111}
({\bm{y}}-(\sigma+t) {\bm{a}})_{\kappa^{\sigma}(\bar{K}(\sigma)+1)}+\frac{1}{\bar{K}(\sigma)+1}\left(  1-\sum_{i=1}^{\bar{K}(\sigma)+1}({\bm{y}}-(\sigma+t) {\bm{a}})_{\kappa^{\sigma}(i)}\right)>0.
\end{equation}
On the other hand, according to $\gamma_2(\sigma)=\emptyset$ and the definition of $\bar{K}(\sigma)$, we have
$$
({\bm{y}}-\sigma {\bm{a}})_{\kappa^{\sigma}(\bar{K}(\sigma)+1)}+\frac{1}{\bar{K}(\sigma)+1}\left(  1-\sum_{i=1}^{\bar{K}(\sigma)+1}({\bm{y}}-\sigma {\bm{a}})_{\kappa^{\sigma}(i)}\right)<0,
$$
which implies that there exists a sufficiently small $t$ such that 
$$
\begin{aligned}
({\bm{y}}-\sigma {\bm{a}})_{\kappa^{\sigma}(\bar{K}(\sigma)+1)}&+\frac{1}{\bar{K}(\sigma)+1}\left(  1-\sum_{i=1}^{\bar{K}(\sigma)+1}({\bm{y}}-\sigma {\bm{a}})_{\kappa^{\sigma}(i)}\right)\\
&+t\left(-{{\bm{a}}}_{\kappa^{\sigma}(\bar{K}(\sigma)+1)}+\frac{1}{\bar{K}(\sigma)+1}\sum_{i=1}^{\bar{K}(\sigma)+1}{{\bm{a}}}_{\kappa^{\sigma}(i)}\right)<0,
\end{aligned}
$$ 
which is obviously inconsistent with \eqref{111}.
\item[(ii)] If $\bar{K}(\sigma+t)=\bar{K}(\sigma)-1<\bar{K}(\sigma),$ then
\begin{equation}\nonumber
({\bm{y}}-(\sigma+t) {\bm{a}})_{\kappa^{\sigma}(\bar{K}(\sigma)-1)}+\frac{1}{\bar{K}(\sigma)-1}\left(  1-\sum_{i=1}^{\bar{K}(\sigma)-1}({\bm{y}}-(\sigma+t) {\bm{a}})_{\kappa^{\sigma}(i)}\right)>0.
\end{equation}
However, by definition of $\bar{K}(\sigma)$, one obtains
$$
({\bm{y}}-\sigma {\bm{a}})_{\kappa^{\sigma}(\bar{K}(\sigma))}+\frac{1}{\bar{K}(\sigma)}\left(  1-\sum_{i=1}^{\bar{K}(\sigma)}({\bm{y}}-\sigma {\bm{a}})_{\kappa^{\sigma}(i)}\right)>0.
$$
Thus,  there exists a sufficiently small $t$ such that 
$$
\begin{aligned}
	({\bm{y}}-\sigma {\bm{a}})_{\kappa^{\sigma}(\bar{K}(\sigma))}&+\frac{1}{\bar{K}(\sigma)}\left(  1-\sum_{i=1}^{\bar{K}(\sigma)}({\bm{y}}-\sigma {\bm{a}})_{\kappa^{\sigma}(i)}\right)\\
	&+t\left(-{{\bm{a}}}_{\kappa^{\sigma}(\bar{K}(\sigma))}+\frac{1}{\bar{K}(\sigma)}\sum_{i=1}^{\bar{K}(\sigma)}{{\bm{a}}}_{\kappa^{\sigma}(i)}\right)>0.
\end{aligned}
$$
This indicated that $\bar{K}(\sigma+t)=\bar{K}(\sigma)$ for the sufficiently small $t$, which obviously contradicts to the assumption $\bar{K}(\sigma+t)<\bar{K}(\sigma).$
\end{itemize}

 By combining the above analysis with a simple calculation, one  has
	\begin{equation}\label{rde1}
			\psi'_{+}(\sigma)=\sum_{i\in\gamma_1(\sigma)}\bm{a}_i\left(-{{\bm{a}}}_i+\frac{1}{\bar{K}(\sigma)}\sum_{j=1}^{{\bar{K}(\sigma)}}{{\bm{a}}}_{\kappa^{\sigma}(j)}\right)= \sum_{i\in\gamma_1(\sigma)}\bm{a}_i\left(-\bm{a}_i+\frac{\sum_{j\in\gamma_1(\sigma)}\bm{a}_j}{|\gamma_1(\sigma)|}\right).
	\end{equation}
Similarly, we obtain the left derivative of $\psi$:
	\begin{equation}\label{lde1}
	\psi'_{-}(\sigma)=\sum_{i\in\gamma_1(\sigma)}\bm{a}_i\left(-{{\bm{a}}}_i+\frac{1}{\bar{K}(\sigma)}\sum_{j=1}^{{\bar{K}(\sigma)}}{{\bm{a}}}_{\kappa^{\sigma}(j)}\right)= \sum_{i\in\gamma_1(\sigma)}\bm{a}_i\left(-\bm{a}_i+\frac{\sum_{j\in\gamma_1(\sigma)}\bm{a}_j}{|\gamma_1(\sigma)|}\right).
\end{equation}

	By Cauchy inequality, we  know that $\psi'_{+}(\sigma)$ and $\psi'_{-}(\sigma)$ are non-positive.
	
	\item[(2)] If $\gamma_2(\sigma)\neq\emptyset$, then
	$$
	({\bm{y}}-\sigma {\bm{a}})_{\kappa^{\sigma}(\bar{K}(\sigma)+1)}=	({\bm{y}}-\sigma {\bm{a}})_{\kappa^{\sigma}(\bar{K}(\sigma)+2)}=\dots=	({\bm{y}}-\sigma {\bm{a}})_{\kappa^{\sigma}(\bar{K}(\sigma)+|\gamma_2(\sigma)|)}
	$$
	and 
	$$
	({\bm{y}}-\sigma {\bm{a}})_{\kappa^{\sigma}(\bar{K}(\sigma)+|\gamma_2(\sigma)|)}>({\bm{y}}-\sigma {\bm{a}})_{\kappa^{\sigma}(\bar{K}(\sigma)+|\gamma_2(\sigma)|+1)},
	$$
	 where $|\gamma_2(\sigma)|$ is the cardinality of $\gamma_2(\sigma)$.
	Let $\kappa_{+}^{\sigma}:\{1,2,\dots,|\gamma_2(\sigma)|\}\to\{\kappa^{\sigma}(\bar{K}(\sigma)+1),\kappa^{\sigma}(\bar{K}(\sigma)+2),\dots,\kappa^{\sigma}(\bar{K}(\sigma)+|\gamma_2(\sigma)|)\}$ be permutation such that 
	$$
	\left[ -{\bm{a}}\right] _{\kappa_{+}^{\sigma}(1)}\geq	\left[ -{\bm{a}}\right] _{\kappa_{+}^{\sigma}(2)}\geq\cdots\geq	\left[ -{\bm{a}}\right] _{\kappa_{+}^{\sigma}(|\gamma_2(\sigma)|)}.
	$$

	Next, we proceed to derive  the right derivative of $\psi$. Let $ \tilde{\lambda}_{+}(\sigma)$ be the largest non-negative integer $i\in\{1,\dots,|\gamma_2(\sigma)|\}$ such that
	$$
	({\bm{y}}-(\sigma+t) {\bm{a}})_{\kappa_{+}^{\sigma}(i)}+\frac{1}{\bar{K}(\sigma)+i}\left(  1-\sum_{j=1}^{\bar{K}(\sigma)}({\bm{y}}-(\sigma+t) {\bm{a}})_{\kappa^{\sigma}(j)}-\sum_{j=1}^{i}({\bm{y}}-(\sigma+t) {\bm{a}})_{\kappa_{+}^{\sigma}(j)}\right)>0.
	$$
	Since it holds that for any $i=1,\dots,|\gamma_2(\sigma)|$,
	$$
	({\bm{y}}-\sigma {\bm{a}})_{\kappa_{+}^{\sigma}(i)}+\frac{1}{\bar{K}(\sigma)+i}\left(  1-\sum_{j=1}^{\bar{K}(\sigma)}({\bm{y}}-\sigma {\bm{a}})_{\kappa^{\sigma}(j)}-\sum_{j=1}^{i}({\bm{y}}-\sigma {\bm{a}})_{\kappa_{+}^{\sigma}(j)}\right)=0,
	$$
	one easily knows that $ \tilde{\lambda}_{+}(\sigma)$ is the largest non-negative integer $i\in\{1,\dots,|\gamma_2(\sigma)|\}$ such that
	$$
	- {\bm{a}}_{\kappa_{+}^{\sigma}(i)}+\frac{1}{\bar{K}(\sigma)+i}\left( \sum_{j=1}^{\bar{K}(\sigma)}{\bm{a}}_{\kappa^{\sigma}(j)}+\sum_{j=1}^{i}{\bm{a}}_{\kappa_{+}^{\sigma}(j)}\right) >0.
	$$

Define $\tilde{\zeta}_{+}:=\left[ \sum_{j=1}^{\bar{K}(\sigma)}{\bm{a}}_{\kappa^{\sigma}(j)}+\sum_{j=1}^{\tilde{\lambda}_{+}(\sigma)}{\bm{a}}_{\kappa_{+}^{\sigma}(j)}\right] /(\bar{K}(\sigma)+\tilde{\lambda}_{+}(\sigma))$. By definition of the right derivative, we  obtain that
\begin{equation}\label{rde2}
	\psi'_{+}(\sigma)= \sum_{i\in\gamma_2(\sigma)}\bm{a}_i\max\left(-\bm{a}_i+\tilde{\zeta}_{+},0\right)+\sum_{i\in\gamma_1(\sigma)}\bm{a}_i\left(-\bm{a}_i+\tilde{\zeta}_{+}\right).
\end{equation}
Similarly, we derive the left derivative of $\psi$. 
Let $ \tilde{\lambda}_{-}(\sigma)$ be the smallest non-negative integer $i\in \{|\gamma_2(\sigma)|,|\gamma_2(\sigma)|-1,\dots,1\}$ such that
$$
\begin{aligned}
({\bm{y}}-&(\sigma+t) {\bm{a}})_{\kappa_{+}^{\sigma}(i)}\\
&+\frac{1}{\bar{K}(\sigma)+|\gamma_2(\sigma)|+1-i}\left(  1-\sum_{j=1}^{\bar{K}(\sigma)}({\bm{y}}-(\sigma+t) {\bm{a}})_{\kappa^{\sigma}(j)}-\sum_{j=|\gamma_2(\sigma)|}^{i}({\bm{y}}-(\sigma+t) {\bm{a}})_{\kappa_{+}^{\sigma}(j)}\right)>0.
\end{aligned}
$$
Since for any $i\in \{|\gamma_2(\sigma)|,|\gamma_2(\sigma)|-1,\dots,1\}$,
$$
({\bm{y}}-\sigma {\bm{a}})_{\kappa_{+}^{\sigma}(i)}+\frac{1}{\bar{K}(\sigma)+|\gamma_2(\sigma)|+1-i}\left(  1-\sum_{j=1}^{\bar{K}(\sigma)}({\bm{y}}-\sigma {\bm{a}})_{\kappa^{\sigma}(j)}-\sum_{j=|\gamma_2(\sigma)|}^{i}({\bm{y}}-\sigma {\bm{a}})_{\kappa_{+}^{\sigma}(j)}\right)=0, 
$$
it is clear that $ \tilde{\lambda}_{-}(\sigma)$ is the smallest non-negative integer $i\in\{|\gamma_2(\sigma)|,|\gamma_2(\sigma)|-1,\dots,1\}$ such that
$$
- {\bm{a}}_{\kappa_{+}^{\sigma}(i)}+\frac{1}{\bar{K}(\sigma)+|\gamma_2(\sigma)|+1-i}\left( \sum_{j=1}^{\bar{K}(\sigma)}{\bm{a}}_{\kappa^{\sigma}(j)}+\sum_{j=|\gamma_2(\sigma)|}^{i}{\bm{a}}_{\kappa_{+}^{\sigma}(j)}\right) <0.
$$

Define $\tilde{\zeta}_{-}:=\left[ \sum_{j=1}^{\bar{K}(\sigma)}{\bm{a}}_{\kappa^{\sigma}(j)}+\sum_{j=|\gamma_2(\sigma)|}^{\tilde{\lambda}_{-}(\sigma)}{\bm{a}}_{\kappa_{+}^{\sigma}(j)}\right] /(\bar{K}(\sigma)+|\gamma_2(\sigma)|+1-\tilde{\lambda}_{-}(\sigma))$. By definition of the left derivative,  $\psi'_{-}(\sigma)$ admits the following form:
	\begin{equation}\label{lde2}
	\psi'_{-}(\sigma)= \sum_{i\in\gamma_2(\sigma)}\bm{a}_i\min\left(-\bm{a}_i+\tilde{\zeta}_{-},0\right)+\sum_{i\in\gamma_1(\sigma)}\left(-\bm{a}_i+\tilde{\zeta}_{-}\right).
\end{equation}
\end{itemize}

It follows from the Cauchy inequality that  $\psi'_{+}(\sigma)$ is non-positive.
	For the subsequent algorithm design, we take $\alpha=1$ in \eqref{psi}, which also ensures that the elements chosen from $\partial\psi(\cdot)$ are all non-positive.
	
	\subsubsection{ Algorithm description}
	In this subsection, we present an efficient algorithm based on semismooth Newton method (SSN) for computing the projection $\Pi_{\mathcal{C}}(\cdot)$. The algorithm framework is shown in Algorithm \ref{al:3}.  In Algorithm \ref{al:3}, given a vector $\bm{y}\in\mathbb{R}^n$, we initialize $\sigma=0$ and compute $\psi(0)=\bm{a}^{\top}\Pi_{\Delta_{n-1}}(\bm{y})-b$. If $\psi(0)\leq 0$, then the projection  $\Pi_{\mathcal{C}}(\bm{y})$ is obtained; otherwise, we apply the semismooth Newton  algorithm to solve $\psi(\sigma)=0$ and obtain the approximate optimal solution. Finally, we compute the projection $\Pi_{\mathcal{C}}(\bm{y}).$ 
	
	\begin{algorithm}[H]
		\caption{An algorithm based on semismooth Newton method for  $\Pi_{\mathcal{C}}(\cdot)$}\label{al:3}	
		\begin{algorithmic}[1]
			\Require Given the vector $\bm{y}\in\mathbb{R}^n$, set $\sigma^0\in[0,+\infty),\hat{\mu}\in(0,1/2),\hat{\delta},\hat{\tau}_1\in(0,1),\hat{\tau}_2\in(0,1], \epsilon>0$, and $j=0$.
			\State  Compute $\Pi_{\Delta_{n-1}}(\bm{y})$ and $\psi(0)$. If $\psi(0)\leq0$, then $\Pi_{\mathcal{C}}(\bm{y}) = \Pi_{\Delta_{n-1}}(\bm{y})$; otherwise, go to  step 2.
			\State  {\bf{Semismooth Newton Algorithm Begins:}} 
			\While{$|\psi(\sigma^j)|>\epsilon$} 
			\State  Choose $\upsilon_j\in\partial \psi(\sigma^j)$, then compute the Newton direction via $$
			\Delta\sigma^j=-\psi(\sigma^j)/(\upsilon_j-\bar{\epsilon}_j),
			$$
			where  $\bar{\epsilon}_j:=\hat{\tau}_2\min\{\hat{\tau}_1,|\psi(\sigma^j)|\}.$
			\State  Let $m_j$ be the smallest non-negative integer $m$ such that  
			$$
			h(\sigma^j+\hat{\delta}^m\Delta\sigma^j)\geq h(\sigma^j)+\hat{\mu}\hat{\delta}^m\psi(\sigma^j)\Delta\sigma^j.
			$$
			\State  Update $\sigma^{j+1}=\sigma^{j}+\hat{\delta}^{m_j}\Delta\sigma^j,\ j\gets j+1$.
			\EndWhile
			\State Compute the projection $\Pi_{\mathcal{C}}(\bm{y}) = \Pi_{\Delta_{n-1}}(\bm{y}-\sigma^{j} \bm{a})$.
		\end{algorithmic}
	\end{algorithm}
	From  \cite[Theorem 3.1]{W.L.L.2022} and \cite{Z.S.T.2010}, we establish  the convergence results for Algorithm \ref{al:3}, which  are stated in the next theorem.
	\begin{theorem}\label{thm:1}
		Let $\{\sigma^j\}$ be the infinite sequence generated by Algorithm \ref{al:3}.  Assume that  given data vectors ${\bm{a}}\in\mathbb{R}^n$ and $b\in\mathbb{R}$ satisfy ${\bm{a}}\neq b{\bm{e}}$, where ${\bm{e}}$ denotes the vector with all entries being $1$.
		Then $\{\sigma^j\}$ is  bounded and any accumulation point $\sigma^*$ of $\{\sigma^j\}$ is an optimal solution to problem \eqref{dual}. In addition, the rate of local convergence is quadratic, i.e., $|\sigma^{j+1}-\sigma^*|=O(|\sigma^j-\sigma^*|^2)$.
	\end{theorem}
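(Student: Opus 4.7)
The plan is to follow the globalized semismooth Newton convergence framework of \cite{Z.S.T.2010}, specialized to our one-dimensional dual setting in the manner of \cite[Theorem 3.1]{W.L.L.2022}. Three ingredients must be verified in sequence: (i) the Newton direction in step 4 is well-defined and gives ascent for the concave dual objective $h$; (ii) the iterates stay in a compact set and every accumulation point $\sigma^*$ satisfies $\psi(\sigma^*)=0$, so that by Proposition \ref{pro7}(2) it is optimal for \eqref{dual}; (iii) near $\sigma^*$, strong semismoothness of $\psi$ combined with a uniform lower bound on $|\upsilon_j-\bar\epsilon_j|$ forces the unit step to be accepted and delivers the quadratic estimate.

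For (i), recall from \eqref{rde1}--\eqref{lde2} that Cauchy--Schwarz makes every element of $\partial \psi(\sigma)$ non-positive, so $\upsilon_j-\bar\epsilon_j\leq -\bar\epsilon_j<0$ whenever $\psi(\sigma^j)\neq 0$; thus $\Delta\sigma^j$ is well-defined, and $h'(\sigma^j)\Delta\sigma^j=-\psi(\sigma^j)^2/(\upsilon_j-\bar\epsilon_j)>0$, giving ascent. Concavity of $h$ then implies that the Armijo-type line search in step 5 of Algorithm \ref{al:3} terminates in finitely many inner iterations.

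For (ii), Proposition \ref{pro7}(1) says $h$ is concave, closed, and coercive on $\mathbb{R}_+$. Since the line search produces $h(\sigma^{j+1})\geq h(\sigma^j)$, the entire sequence lies in the sublevel set $\{\sigma\geq 0 : h(\sigma)\geq h(\sigma^0)\}$, which is compact; this gives boundedness of $\{\sigma^j\}$. Because $h$ is bounded above on the sublevel set, the Armijo inequality forces $\hat\delta^{m_j}\psi(\sigma^j)\Delta\sigma^j\to 0$; combined with continuity of $\psi$, local boundedness of $\partial\psi$, and the definition $\bar\epsilon_j=\hat\tau_2\min\{\hat\tau_1,|\psi(\sigma^j)|\}$, this forces $\psi(\sigma^*)=0$ along any convergent subsequence, and Proposition \ref{pro7}(2) identifies the limit as optimal.

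The main obstacle is the quadratic rate in (iii), and this is where the hypothesis ${\bm{a}}\neq b{\bm{e}}$ is used. Since $\psi$ is piecewise linear it is strongly semismooth, so the standard semismooth Newton estimate yields $|\sigma^{j+1}-\sigma^*|=O(|\sigma^j-\sigma^*|^2)$ \emph{provided} $|\upsilon_j-\bar\epsilon_j|$ is bounded away from $0$ uniformly in $j$ near $\sigma^*$. Formulas \eqref{rde1}--\eqref{lde2} however show that $\psi'_{\pm}(\sigma^*)$ can vanish precisely when all $\bm{a}_i$ with $i\in\gamma_1(\sigma^*)\cup\gamma_2(\sigma^*)$ take a common value; combined with $\psi(\sigma^*)=\bm{a}^\top\bar{\bm{x}}(\sigma^*)-b=0$ and $\bm{e}^\top \bar{\bm{x}}(\sigma^*)=1$, that common value would necessarily equal $b$, and a short case analysis along the lines of \cite[Lemma 3.3]{W.L.L.2022} shows this forces $\bm{a}=b\bm{e}$, contradicting the hypothesis. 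Hence $\upsilon_j$ can be chosen with $|\upsilon_j|$ bounded below by a positive constant in a neighborhood of $\sigma^*$, the unit step $\hat\delta^{m_j}=1$ is eventually accepted by the standard Dennis--Mor\'e-type argument, and the quadratic convergence rate follows from \cite{Z.S.T.2010}.
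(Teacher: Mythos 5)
Your proposal follows essentially the same route as the paper's proof: well-definedness and ascent from the non-positivity of the elements of $\partial\psi$, boundedness of $\{\sigma^j\}$ from coercivity of $h$, optimality of accumulation points via the globalized semismooth Newton framework of \cite{Z.S.T.2010}, and the quadratic rate from strong semismoothness once the hypothesis $\bm{a}\neq b\bm{e}$ is used to rule out a vanishing element of $\partial\psi(\sigma^*)$ --- the same ``common value $\bar{a}=b$'' contradiction the paper computes in \eqref{ab}, followed by eventual acceptance of the unit step. The argument is correct and matches the paper's structure.
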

	\begin{proof}
		By \cite[Proposition 3.3]{Z.S.T.2010}, for any $j\geq 0$, if $\psi(\sigma^j)\neq0,$ then $\Delta\sigma^j$ is an ascent direction. Thus, Algorithm \ref{al:3} is well defined. Since problem \eqref{dual} satisfies the  Slater condition, we know from Proposition \ref{pro7} that the function $h$ is coercive. It is known from \cite{DB.2009} that the function $h$ is coercive if and only if for  every $\tilde{\alpha}\in\mathbb{R}$ the set $\left\lbrace \sigma\mid h(\sigma)\leq \tilde{\alpha}\right\rbrace$ is compact. Therefore the sequence $\{\sigma^j\}$ is bounded. Let $\sigma^*$ be any cluster point of $\{\sigma^j\}$. From \cite[Theorem 3.4]{Z.S.T.2010} and the  concavity of $h$,  one can readily get that $\sigma^*$ is the optimal solution to the problem \eqref{dual}.
		
		Next, we verify that any $\upsilon^0\in\partial \psi(\sigma^*)$ is negative. Suppose by contradiction that $\upsilon^0=0.$ By the definition of $\partial\psi(\cdot)$ in \eqref{psi},  we know that if an element of $\partial\psi(\sigma^*)$ is equal to $0$ then  $ \bm{a}_i=\bm{a}_j,\ \forall i,j\in\gamma_1(\sigma^*)$. Let $ \bm{a}_i=\bm{a}_j=\bar{a},\ \forall i,j\in\gamma_1(\sigma^*)$. Since $\sigma^*$ is an optimal solution to problem \eqref{dual},  we have 
		$$
		\psi(\sigma^*)=\bm{a}^{\top}\Pi_{\Delta_{n-1}}(\bm{y}-\sigma^*\bm{a})-b=0,
		$$
		which implies 
		\begin{equation}\label{ab}
			\begin{aligned}
				&\sum_{i\in\gamma_1(\sigma^*)\cup\gamma_2(\sigma^*)}\bm{a}_i\left[ (\bm{y}-\sigma^* \bm{a})_i-\frac{\sum_{j=1}^{{\bar{K}}(\sigma^*)}(\bm{y}-\sigma^* \bm{a})_{\kappa^{\sigma^*}(j)}-1}{\bar{K}(\sigma^*)}\right]-b\\
				&=\bar{{a}}\sum_{i\in\gamma_1(\sigma^*)}(\bm{y}-\sigma^* \bm{a})_i-\bar{{a}}\sum_{j=1}^{\bar{K}(\sigma^*)}(\bm{y}-\sigma^* \bm{a})_{\kappa^{\sigma^*}(j)}+\bar{{a}}-b\\
				&=\bar{{a}}\sum_{j=1}^{\bar{K}(\sigma^*)}(\bm{y}-\sigma^* \bm{a})_{\kappa^{\sigma}(j)}-\bar{{a}}\sum_{j=1}^{\bar{K}(\sigma^*)}(\bm{y}-\sigma^* \bm{a})_{\kappa^{\sigma^*}(j)}+\bar{{a}}-b=\bar{{a}}-b=0,
			\end{aligned}
		\end{equation}
		which contradicts to our assumption. Thus, $\upsilon^0$ is negative.
		
		Since any $\upsilon^0\in\partial \psi(\sigma^*)$ is negative, $\{(\upsilon_j-\bar{\epsilon}_j)^{-1}\}$ is uniformly bounded for all $j$ sufficiently large.   Recall that $\psi(\cdot)$ is strongly semismooth. Thus,  by virtue of  \cite[Theorem 3.5]{Z.S.T.2010},  it holds that for all $j$ sufficiently large, 
		\begin{equation}\label{qua}
			|\sigma^j+\Delta\sigma^j-\sigma^*|={O}(|\sigma^j-\sigma^*|^2),
		\end{equation}
		and there exists a constant $\tilde{\delta}>0$ such that $\psi(\sigma^j)\Delta\sigma^j\geq\tilde{\delta}|\Delta\sigma^j|^2. $
		Moreover, it follows from \cite[Theorem 3.3 \& Remark 3.4]{F.1995} that for $\hat{\mu}\in(0,1/2)$, there exists an integer $j_0$ such that for any $j\geq j_0$, $h(\sigma^j+\Delta\sigma^j)\geq h(\sigma^j)+\hat{\mu}\psi(\sigma^j)\Delta\sigma^j,$ which implies that for all $j\geq j_0$, 
		\begin{equation}\label{sigmaj}
			\sigma^{j+1}=\sigma^j+\Delta\sigma^j.
		\end{equation}
		Combining with \eqref{qua} and \eqref{sigmaj}, we complete the proof.
	\end{proof}
	
	\section{Numerical experiments}\label{sect:5}
	In this section, we compare   LRSA and SSN  with the solver Gurobi for computing the projection onto the intersection of simplex and singly linear constraint. All our experiments are conducted in MATLAB R2019a  on a Dell desktop  computer with Intel Xeon  Gold 6144 CPU @ 3.50GHz  and 256 GB RAM. The code for calculating the projection onto the simplex is freely available at \url{https://lcondat.github.io/download/condat_simplexproj.c}

	In our experiments, the parameters in Algorithm LRSA are set to be $\Delta\sigma=1, \rho=2$, and $\epsilon=10^{-7}$. The parameters in Algorithm SSN are chosen as $\sigma^0=0,\hat{\delta}=0.5,\hat{\tau}_1=1$ and $\hat{\tau}_2=10^{-3}$.  For this  accuracy tolerance $\epsilon$, we  terminate  LRSA and SSN when $\psi(0)\leq0$, or $|\psi(\hat{\sigma})|\leq \epsilon$ or the number of iterations exceeds $500$, where $|\psi(\hat{\sigma})|$ represents the absolute value of $\psi(\cdot)$ at  the approximate solution $\hat{\sigma}$.  We test instances with $n= 5\times10^{4}, 10^{5}, 5\times10^{5}, 10^{6}, 5\times10^{6}, 10^{7}, 5\times10^{7}$, and $10^{8}$, respectively. To make the results more convincing, each instance is run $5$ times.

	\begin{table}[h]
		\caption{Numerical results of the LRSA, SSN, and Gurobi for Example \ref{ex1} on random data }\label{tab:1}%
		\begin{tabular*}{\textwidth}{@{\extracolsep\fill}ccccc}
			\hline
			$n$ &  Algorithm  & avgtime & iter & {$|\psi(\hat{\sigma})|$} \\
			\midrule
			 5e+04	 & LRSA	 & \bf{0.0030} 	 &  8 	 & 5.1e-13 \\  
			& SSN	 & 0.0154 	 &  7 	 & 3.8e-08 \\  
			& Gurobi	 & 0.1484 	 &  16 	 & 1.8e-10 \\  
			1e+05	 & LRSA	 & \bf{0.0094}	 &  10 	 & 1.7e-12 \\  
			& SSN	 & 0.0190 	 &  7 	 & 4.7e-08 \\  
			& Gurobi	 & 0.2996 	 &  16 	 & 3.8e-10 \\  
			5e+05	 & LRSA	 & 0.0312 	 &  10 	 & 2.3e-12 \\  
			& SSN	 & \bf{0.0252} 	 &  4 	 & 3.7e-08 \\  
			& Gurobi	 & 1.8182 	 &  17 	 & 2.5e-10 \\  
			1e+06	 & LRSA	 & \bf{0.0468} 	 &  10 	 & 1.4e-11 \\  
			& SSN	 & 0.0996 	 &  9 	 & 3.1e-08 \\  
			& Gurobi	 & 4.3182 	 &  20 	 & 3.6e-09 \\  
			5e+06	 & LRSA	 & \bf{0.2284} 	 &  10 	 & 4.1e-10 \\  
			& SSN	 & 0.2310 	 &  4 	 & 6.7e-09 \\  
			& Gurobi	 & 21.4664 	 &  19 	 & 4.4e-09 \\  
			1e+07	 & LRSA	 & 0.4310 	 &  10 	 & 6.0e-11 \\  
			& SSN	 & \bf{0.4224} 	 &  4 	 & 9.3e-09 \\  
			& Gurobi	 & 46.2566 	 &  21 	 & 7.1e-07 \\  
			5e+07	 & LRSA	 & \bf{2.0432} 	 &  10 	 & 1.9e-08 \\  
			& SSN	 & 2.7908 	 &  6 	 & 2.1e-08 \\  
			& Gurobi	 & 222.5594 	 &  21 	 & 1.2e-10 \\  
			1e+08	 & LRSA	 & \bf{4.4368} 	 &  11 	 & 4.2e-10 \\  
			& SSN	 & 4.5172 	 &  5 	 & 2.1e-08 \\  
			& Gurobi	 & 445.3348 	 &  21 	 & 6.1e-09 \\
			\hline
		\end{tabular*}
	{\emph{Note.}}\ The best result in each experiment is highlighted in bold.
	\end{table}

	The general case of problem \eqref{1.1} we consider in numerical experiments  is described in the first example below.
	
	\begin{example}\label{ex1}
		For problem \eqref{1.1},  the vectors $\bm{y}$ and $\bm{a}\in\mathbb{R}^n$ are randomly generated $n\times 1$ vectors with $\bm{y}=-3*{{rand}}(n,1)$ and $\bm{a} = 20*{{rand}}(n,1)$, respectively. To ensure the feasibility of problem \eqref{1.1}, we take $b=0.45*\max(\bm{a}).$ 
	\end{example}

	Table \ref{tab:1} reveals the numerical results of  Algorithm LRSA, Algorithm SSN, and Gurobi for Example \ref{ex1} on random data. This table includes the average CPU time (avgtime) for the five runs, the maximum number of iterations (iter) among the five runs, and the absolute value of $\psi(\cdot)$ at the  solution $\hat{\sigma}$ ($|\psi(\hat{\sigma})|$).
	It is observed that Algorithm LRSA, Algorithm SSN, and Gurobi successfully solve all instances with high accuracy.
	As shown in  Table \ref{tab:1}, the running time of LRSA and SSN is significantly shorter than  that of Gurobi.
	In particular,  when  the dimension is $n=10^{8}$, the time of the LRSA and SSN is less than $5$ seconds, while  Gurobi  needs more than $400$ seconds to obtain the approximate projection. The running time of LRSA  is about $100$ times  faster than that of Gurobi, while the running time of SSN  is about $80$ times  faster than that of Gurobi. 
	These results in Table \ref{tab:1} highlight the excellent performance of the LRSA and SSN in practice.

	In the second example of the numerical experiments, we consider the degenerate case of problem \eqref{1.1}, i.e., the relative interior of $\mathcal{C}$ lies in the relative boundary of the simplex.
	
	\begin{example}\label{ex2}
		For problem \eqref{1.1},  the vectors $\bm{y}$ is randomly generated $n\times 1$ vectors with $\bm{y}=-3*{{rand}}(n,1)$. We set ${\bm{a}} = [b+1,b,\dots,b]\in\mathbb{R}^n$ and $b = 50$. 
	\end{example}
	\begin{table}[h]
	\caption{Numerical results of the LRSA, SSN, and Gurobi for Example \ref{ex2} on random data}\label{tab:2}%
	\begin{tabular*}{\textwidth}{@{\extracolsep\fill}ccccc}
		\hline
		$n$ &  Algorithm  & avgtime & iter & {$|\psi(\hat{\sigma})|$} \\
		\midrule
		5e+04	 & LRSA	 & {0.0032} 	 &  5 	 & 2.6e-09 \\  
		& SSN	 & \bf{0.0030} 	 &  1 	 & 2.3e-09 \\  
		& Gurobi	 & 0.1436 	 &  13 	 & 3.6e-13 \\  
		1e+05	 & LRSA	 & \bf{0.0030} 	 &  3 	 & 6.3e-13 \\  
		& SSN	 & 0.0094 	 &  1 	 & 6.8e-09 \\  
		& Gurobi	 & 0.2862 	 &  14 	 & 4.4e-12 \\  
		5e+05	 & LRSA	 & \bf{0.0126} 	 &  5 	 & 6.3e-08 \\  
		& SSN	 & 0.0254 	 &  1 	 & 3.3e-11 \\  
		& Gurobi	 & 1.7308 	 &  15 	 & 2.0e-12 \\  
		1e+06	 & LRSA	 & \bf{0.0282} 	 &  5 	 & 9.5e-08 \\  
		& SSN	 & 0.0560 	 &  1 	 & 2.4e-08 \\  
		& Gurobi	 & 3.6060 	 &  15 	 & 1.7e-11 \\  
		5e+06	 & LRSA	 & \bf{0.0778} 	 &  3 	 & 1.2e-10 \\  
		& SSN	 & 1.8348 	 &  11 	 & 6.5e-08 \\  
		& Gurobi	 & 18.9408 	 &  15 	 & 1.7e-10 \\  
		1e+07	 & LRSA	 & \bf{0.1314} 	 &  3 	 & 3.5e-10 \\  
		& SSN	 & 1.0902 	 &  2 	 & 7.4e-11 \\  
		& Gurobi	 & 39.5760 	 &  16 	 & 2.6e-11 \\  
		5e+07	 & LRSA	 & \bf{0.6278} 	 &  3 	 & 1.1e-10 \\  
		& SSN	 & 5.1334 	 &  2 	 & 1.2e-11 \\  
		& Gurobi	 & 180.6106 	 &  14 	 & 2.1e-08 \\  
		1e+08	 & LRSA	 & \bf{1.2404} 	 &  3 	 & 8.2e-10 \\  
		& SSN	 & 10.0886 	 &  2 	 & 4.6e-10 \\  
		& Gurobi	 & 418.7790 	 &  17 	 & 2.2e-09 \\
		\hline
	\end{tabular*}
	{\emph{Note.}}\ The best result in each experiment is highlighted in bold.
\end{table}
	
Table \ref{tab:2} presents the numerical results of  Algorithm LRSA, Algorithm SSN, and Gurobi for Example \ref{ex2} on random data. As shown inTable \ref{tab:2}, for degenerate cases, both Algorithm LRSA and Algorithm SSN outperform the solver Gurobi in terms of runtime.
	For degenerate cases in Example \ref{ex2}, since $\psi(\sigma)={\bm{a}}^{\top}\Pi_{\Delta_{n-1}}({\bm{y}}-\sigma{\bm{a}})-b=( \Pi_{\Delta_{n-1}}({\bm{y}}-\sigma{\bm{a}}))_{1}$, $\psi(\sigma)$ is  usually close to zero or a small value at the start of the algorithm. In most cases, after only a few iterations in the Secant Phase, $|\psi(\sigma)|$ already meets the specified tolerance. In degenerate cases, the LRSA algorithm outperforms Gurobi because it is specifically designed to exploit the structural characteristics of the problem. In contrast, Gurobi, while being a cutting-edge optimization solver, is designed with a broad applicability in mind, which may not fully leverage the specialized nuances of certain problems. Meanwhile, by comparing Algorithm LRSA with Algorithm SSN, LRSA demonstrates greater efficiency in handling degenerate cases. Based on the structure of $\bm{a}$ and $b$ in Example \ref{ex2}, Theorem \ref{thm:1} suggests that the condition $\upsilon^0 \in \partial \psi(\sigma^*)$ being negative may not always be met by SSN. During numerical iterations, we observe that SSN's line search requires more iterations, potentially explaining its longer running time.	
	
The third example we consider is a projection problem of the set $\mathcal{C}$ involved in the Wasserstein distributionally robust  portfolio model \cite{Z.L.2023}.
	\begin{example}\label{ex3}
		Assume that $\{\hat{{\bm{\xi}}}_1,\hat{{\bm{\xi}}}_2,\dots,\hat{{\bm{\xi}}}_m\}$ is a set of the independent observations of the asset return ${\bm{\xi}}\in\mathbb{R}^n$. Denote the matrix ${\bm{A}}:=[\tilde{{\bm{\mu}}}-\hat{{\bm{\xi}}}_1,\ldots,\tilde{{\bm{\mu}}}-\hat{{\bm{\xi}}}_m] ^{\top}\in\mathbb{R}^{m\times n}$, where $\tilde{{\bm{\mu}}}=\frac{1}{m}\sum_{i=1}^{m}\hat{{\bm{\xi}}}_i.$ We consider the projection problem involved in \cite{Z.L.2023} as follows:
	\begin{equation}\label{example3}
				\begin{split}
				\mathop{\min}\limits_{\bm{x}\in\mathbb{R}^n}& \ \frac{1}{2}\|\bm{x}-(\tilde{{\bm{u}}}+A^{\top}\tilde{{\bm{v}}})\|^2  \\
				\mbox{s.t.}\  & \ \tilde{\bm{\mu}}^{\top}\bm{x}\geq \hat{\rho},\ {\bm{x}}\in\Delta_{n-1},\\
			\end{split}
	\end{equation}
    where the vectors $\tilde{\bm{u}}\in\mathbb{R}^n$ and $\tilde{\bm{v}}\in\mathbb{R}^m$ are randomly generated  vectors with $\tilde{\bm{u}}={rand}(n,1)$ and $\tilde{\bm{v}} = {{rand}}(m,1)$, respectively,
	the target expected return $\hat{\rho}$ is generated by $\hat{\rho}=\min(\tilde{\bm{\mu}})*rand(1,1)$. We collected some stock return datasets from Ken French's website\footnote{\url{https://mba.tuck.dartmouth.edu/pages/faculty/ken.french/data_library.html}}, which include: (1) $25$  Portfolios Formed on Book-to-Market and Operating Profitability ({\emph{25BEMEOP}}); (2) $100$ Portfolios Formed on Size and Investment ({\emph{100MEINV}}). Due to the limited dimensionality of real datasets and their comparable numerical performance, we only select two real datasets for the numerical experiments.
		\begin{table}[H]
		\caption{Numerical results of the LRSA, SSN, and Gurobi for Example \ref{ex3} on real data }\label{tab:3}%
		\begin{tabular*}{\textwidth}{@{\extracolsep\fill}ccccc}
			\hline
			$n$ &  Algorithm  & avgtime & iter & {$|\psi(\hat{\sigma})|$} \\
			\midrule
		 $25BEMEOP$	 & LRSA	 & \bf{0.0000}	 &  1 	 & 5.9e-01 \\  
		& SSN	 & \bf{0.0000} 	 &  1 	 & 5.9e-01 \\  
		& Gurobi	 & 0.0030 	 &  8 	 & 5.9e-01 \\  
		$100MEINV$	 & LRSA	 & \bf{0.0000} 	 &  1 	 & 9.0e-01 \\  
		& SSN	 & \bf{0.0000}	 &  1 	 & 9.0e-01 \\  
		& Gurobi	 & 0.0032	 &  8 	 & 9.0e-01 \\
			\hline
		\end{tabular*}
		{\emph{Note.}}\ The best result in each experiment is highlighted in bold.
	\end{table}
\end{example}
Numerical results of Algorithm LRSA, Algorithm SSN and Gurobi for Example \ref{ex3} on real data are shown in Table \ref{tab:3}. As we can see, the running times of LRSA and SSN are slightly faster than of Gurobi. The optimal solution to problem \eqref{example3} satisfies conditions $\sigma=0$ and $\psi(0)\leq0$.

	In summary, for the general case of  problem \eqref{1.1}, the numerical performance of Algorithm SSN and Algorithm  LRSA is similar and significantly better than that of  Gurobi. For the degenerate problem \eqref{1.1}, the numerical performance of Algorithm  LRSA is better than that of Algorithm  SSN and Gurobi. Therefore,  we  reasonably conclude that  Algorithm LRSA  is superior to  Algorithm SSN and Gurobi in  computing  $\Pi_{\mathcal{C}}(\cdot)$.

	\section{The generalized Jacobian of the projection $\Pi_{\mathcal{C}}(\cdot)$}\label{sect:6}
	In this section, we follow the principles in \cite{L.S.T.2018,L.S.T.2020,H.S.1997} to derive the generalized HS-Jacobian of the projection onto the intersection of  simplex and singly  linear inequality constraint.
	
	Denote 
	$$
	\bm{B}=\begin{bmatrix}
		-\bm{I}_n\\
		\bm{a}^{\top}
	\end{bmatrix}\in\mathbb{R}^{(n+1)\times n},\quad
	\bm{c}=\begin{bmatrix}
		{\bf{0}}_n\\
		b
	\end{bmatrix}\in\mathbb{R}^{n+1}.
	$$
	Then, problem \eqref{1.1} can be reformulated as
	\begin{equation}\label{re:1}
		\mathop{\min}\limits_{\bm{x}\in\mathbb{R}^n} \left\{ \frac{1}{2}\|\bm{x}-\bm{y}\|^2 \ | \ \bm{e}^{\top}\bm{x}=1, \bm{Bx}\leq \bm{c}\right\}.
	\end{equation}
	Note that for given $\bm{y}\in\mathbb{R}^n$, $\Pi_{\mathcal{C}}(\bm{y})$ is the unique optimal solution to problem \eqref{re:1}. The KKT conditions for problem \eqref{re:1} are formulated as 
	\begin{equation}\label{kkt}
		\left\{
		\begin{aligned}
			&\Pi_{\mathcal{C}}(\bm{y})-\bm{y}-\lambda \bm{e}+\bm{B}^{\top}\bm{\mu}=0, \\
			&\bm{e}^{\top}\Pi_{\mathcal{C}}(\bm{y})=1,\\
			&\bm{B}\Pi_{\mathcal{C}}(\bm{y})-\bm{c}\leq 0,\bm{\mu}\geq 0, \bm{\mu}^{\top}(\bm{B}\Pi_{\mathcal{C}}(\bm{y})-\bm{c})=0
		\end{aligned}
		\right.
	\end{equation}
	with Lagrange multipliers $\lambda\in\mathbb{R}, \bm{\mu}\in\mathbb{R}_{+}^n$.  Define the set of multipliers by
	$$
	\mathcal{M} (\bm{y}):=\{(\lambda,\bm{\mu})\in\mathbb{R}\times\mathbb{R}^n\ |\ (\bm{y},\lambda,\bm{\mu})\ {\rm{satisfies}}\ \eqref{kkt}\}.
	$$
	It is clear that $\mathcal{M}(\bm{y})$ is a nonempty polyhedral set containing no lines. Thus, as stated in \cite[ Corollary 18.5.3]{RR.T.1970}, $\mathcal{M} (\bm{y})$ has at least one extreme point. Let $\mathcal{I}(\bm{y})$ be the active index set:
	\begin{equation}\label{Iy}
		\mathcal{I}(\bm{y}):=\{i\ |\ \bm{B}_i\Pi_{\mathcal{C}}(\bm{y})=\bm{c}_i,\ i=1,\dots,n+1 \},
	\end{equation}
	where $B_i$ is the $i$th row of the matrix $B$.
	We define a collection of index sets by
	$$
	\begin{aligned}
		\mathcal{K}_{\mathcal{C}}(\bm{y}):=&\{K\subseteq\{1,\dots,n+1\}\ |\ \exists(\lambda,\bm{\mu})\in\mathcal{M}(\bm{y})\ \ {\rm{s.t.\ supp}}(\bm{\mu})\subseteq K\subseteq \mathcal{I}(\bm{y}),\\
		& [\bm{B}_{K}^{\top}\ \bm{e}]\ {\rm{is \ of\ full\ column \ rank}}\},
	\end{aligned}
	$$
	where ${\rm{supp}}(\bm{\mu})$  denotes the support of $\bm{\mu}$, i.e., the set of indices $i$ such that $\bm{\mu}_i\neq0$. By the existence of the extreme point of $\mathcal{M}(\bm{y})$, we know that $\mathcal{K}_{\mathcal{C}}(\bm{y})$ is nonempty. According to \cite{H.S.1997}, the generalized HS-Jacobian of $\Pi_{\mathcal{C}}(\cdot)$ at $\bm{y}$ is defined by 
	$$
	\mathcal{N}_{\mathcal{C}}(\bm{y}):=\left\{\bm{N}\in\mathbb{R}^{n\times n}\ | \ \bm{N}=\bm{I}_n- [\bm{B}_{K}^{\top}\ \bm{e}]\left(
	\begin{bmatrix}
		\bm{B}_{K}\\
		\bm{e}^{\top}
	\end{bmatrix}[\bm{B}_{K}^{\top}\ \bm{e}]\right)^{-1}\begin{bmatrix}
		\bm{B}_{K}\\
		\bm{e}^{\top}
	\end{bmatrix},\ K\in \mathcal{K}_{\mathcal{C}}(\bm{y})\right\}.
	$$
	The generalized HS-Jacobian of $\Pi_{\mathcal{C}}(\cdot)$ has some important properties \cite{H.S.1997,L.S.T.2020}, which are summarized in the following propositions.
	\begin{proposition}\label{pro9}
		For any given $\bm{y}\in\mathbb{R}^n$, there exists a neighborhood $\mathcal{Y}$ of $\bm{y}$ such that
		$$
		\mathcal{K}_{\mathcal{C}}(\bm{w})\subseteq\mathcal{K}_{\mathcal{C}}(\bm{y}),\ \mathcal{N}_{\mathcal{C}}(\bm{w})\subseteq\mathcal{N}_{\mathcal{C}}(\bm{y}),\ \forall \bm{w}\in\mathcal{Y},
		$$
		and 
		$$
		\Pi_{\mathcal{C}}(\bm{w})=\Pi_{\mathcal{C}}(\bm{y})+\widehat{\bm{N}}(\bm{w}-\bm{y}),\ \forall \widehat{\bm{N}}\in\mathcal{N}_{\mathcal{C}}(\bm{w}).
		$$
		Denote
		\begin{equation}\label{BY}
			\bm{N}_0:=\bm{I}_n- [\bm{B}_{\mathcal{I}(\bm{y})}^{\top}\ \bm{e}]\left(
			\begin{bmatrix}
				\bm{B}_{\mathcal{I}(\bm{y})}\\
				\bm{e}^{\top}
			\end{bmatrix}[\bm{B}_{\mathcal{I}(\bm{y})}^{\top}\ \bm{e}]\right)^{\dagger}\begin{bmatrix}
				\bm{B}_{\mathcal{I}(\bm{y})}\\
				\bm{e}^{\top}
			\end{bmatrix},
		\end{equation}
		where $\mathcal{I}(\bm{y})$ is defined as in \eqref{Iy}. Then, $\bm{N}_0\in\mathcal{N}_{\mathcal{C}}(\bm{y}).$
	\end{proposition}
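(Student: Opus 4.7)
The plan is to follow the Han--Sun framework \cite{H.S.1997} adapted to the constraints in \eqref{re:1}. The three assertions rest on two main observations: upper semicontinuity of the active index set, and a KKT-based linear parameterization of the projection that is available from any rank-consistent subset of active indices.

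For the local behavior of active indices, I would begin by showing $\mathcal{I}(\bm{w})\subseteq\mathcal{I}(\bm{y})$ on a sufficiently small neighborhood of $\bm{y}$. This is immediate from the global Lipschitz continuity of $\Pi_{\mathcal{C}}(\cdot)$ guaranteed by Proposition \ref{pro1}: any $i\notin\mathcal{I}(\bm{y})$ satisfies the strict inequality $\bm{B}_i\Pi_{\mathcal{C}}(\bm{y})<\bm{c}_i$, and since only finitely many indices are involved, this strict inequality persists on a neighborhood.

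Next, fix $K\in\mathcal{K}_{\mathcal{C}}(\bm{w})$ together with a multiplier $(\lambda^w,\bm{\mu}^w)\in\mathcal{M}(\bm{w})$ satisfying $\mathrm{supp}(\bm{\mu}^w)\subseteq K\subseteq\mathcal{I}(\bm{w})$. Because $[\bm{B}_K^{\top}\ \bm{e}]$ has full column rank, the block KKT system that enforces equality on $K$ together with the stationarity and simplex equality rows is nonsingular; its solution is an affine function of the input vector whose linear part with respect to the input is precisely the matrix $\bm{N}_K$ associated with $K$ in the definition of $\mathcal{N}_{\mathcal{C}}$. At the input $\bm{w}$ this affine map reproduces $(\Pi_{\mathcal{C}}(\bm{w}),\bm{\mu}^w,\lambda^w)$. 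Shrinking the neighborhood so that $\bm{B}_i\Pi_{\mathcal{C}}(\cdot)<\bm{c}_i$ for $i\notin\mathcal{I}(\bm{y})$ and $\bm{\mu}^w_j>0$ for $j\in\mathrm{supp}(\bm{\mu}^w)$ are preserved, evaluating the same affine map at $\bm{y}$ delivers a KKT triple whose multiplier support sits inside $K\subseteq\mathcal{I}(\bm{y})$. This simultaneously yields $K\in\mathcal{K}_{\mathcal{C}}(\bm{y})$, the inclusion $\mathcal{N}_{\mathcal{C}}(\bm{w})\subseteq\mathcal{N}_{\mathcal{C}}(\bm{y})$, and the affine identity $\Pi_{\mathcal{C}}(\bm{w})-\Pi_{\mathcal{C}}(\bm{y})=\bm{N}_K(\bm{w}-\bm{y})$ for every $\bm{N}_K\in\mathcal{N}_{\mathcal{C}}(\bm{w})$.

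For the statement $\bm{N}_0\in\mathcal{N}_{\mathcal{C}}(\bm{y})$, I would exploit that $\bm{I}_n-E(E^{\top}E)^{\dagger}E^{\top}$ is the orthogonal projector onto $(\mathrm{range}\,E)^{\perp}$ and hence depends only on the column space of $E$. Since $\mathcal{M}(\bm{y})$ is a nonempty polyhedron containing no lines, it admits an extreme point $(\lambda^*,\bm{\mu}^*)$; a standard extreme-point argument forces $[\bm{B}_{\mathrm{supp}(\bm{\mu}^*)}^{\top}\ \bm{e}]$ to have full column rank. Greedily augmenting $\mathrm{supp}(\bm{\mu}^*)$ with further indices from $\mathcal{I}(\bm{y})$ until maximality produces $K^*\in\mathcal{K}_{\mathcal{C}}(\bm{y})$ for which $[\bm{B}_{K^*}^{\top}\ \bm{e}]$ shares its column space with $[\bm{B}_{\mathcal{I}(\bm{y})}^{\top}\ \bm{e}]$; the augmentation preserves KKT validity because the newly added multiplier entries are set to zero on indices that are themselves active. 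The two orthogonal projectors then coincide, giving $\bm{N}_0\in\mathcal{N}_{\mathcal{C}}(\bm{y})$. The main obstacle I expect is the extreme-point step: verifying that every extreme point of $\mathcal{M}(\bm{y})$ induces a full-column-rank active submatrix, and justifying that the greedy augmentation can be carried out inside $\mathcal{I}(\bm{y})$ while still corresponding to a legitimate element of $\mathcal{M}(\bm{y})$.
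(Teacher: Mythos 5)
The paper itself does not prove this proposition: it is quoted from Han and Sun (1997) and Li, Sun and Toh (2020), so there is no in-paper argument to compare against. Your sketch is a faithful reconstruction of the argument in those references: upper semicontinuity of $\mathcal{I}(\cdot)$, the affine KKT parameterization attached to each rank-consistent index set $K$, and the column-space/extreme-point argument for $\bm{N}_0$ are exactly the ingredients used there. The extreme-point step you flag as the main obstacle does go through: if $[\bm{B}_{\mathrm{supp}(\bm{\mu}^*)}^{\top}\ \bm{e}]$ were column-rank deficient, one could perturb $(\lambda^*,\bm{\mu}^*)$ in both directions along a kernel vector supported on $\mathrm{supp}(\bm{\mu}^*)$ without leaving $\mathcal{M}(\bm{y})$, contradicting extremality; and augmenting with active indices carrying zero multipliers preserves membership in $\mathcal{M}(\bm{y})$, so $K^*\in\mathcal{K}_{\mathcal{C}}(\bm{y})$ and the two orthogonal projectors coincide. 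The one place your write-up needs tightening is the ``shrinking the neighborhood'' step: both $K$ and the multiplier $\bm{\mu}^w$ vary with $\bm{w}$, so the neighborhood cannot be chosen after fixing them. The standard repair is to note that for each of the finitely many $K$ with $[\bm{B}_K^{\top}\ \bm{e}]$ of full column rank, the set $P_K$ of data $\bm{z}$ for which the affine map $\bm{z}\mapsto(\bm{x}_K(\bm{z}),\lambda_K(\bm{z}),\bm{\mu}_K(\bm{z}))$ produces a genuine KKT triple is a closed polyhedron on which $\bm{x}_K(\bm{z})=\Pi_{\mathcal{C}}(\bm{z})$; choosing $\mathcal{Y}$ disjoint from every $P_K$ not containing $\bm{y}$ gives a single neighborhood that works uniformly, and convexity of $P_K$ then delivers the affine identity $\Pi_{\mathcal{C}}(\bm{w})=\Pi_{\mathcal{C}}(\bm{y})+\widehat{\bm{N}}(\bm{w}-\bm{y})$ along the segment $[\bm{y},\bm{w}]$.
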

	\begin{proposition}\label{pro10}
		Let $\bm{\theta}\in\mathbb{R}^n$ be a given vector with each entry $\bm{\theta}_i$ being $0$ or $1$ for each $i = 1,\dots,n$. Let $\bm{\Theta}={\rm{Diag}}(\bm{\theta})$ and $\bm{\Sigma}=\bm{I}_n-\bm{\Theta}$. For any given matrix $\bm{H}\in\mathbb{R}^{M\times n}$, it holds that 
		$$
		\bm{P}:=\bm{I}_n-[\bm{\Theta}\ \bm{H}^{\top}]\left(
		\begin{bmatrix}
			\bm{\Theta}\\
			\bm{H}
		\end{bmatrix}[\bm{\Theta}\ \bm{H}^{\top}]\right)^{\dagger}\begin{bmatrix}
			\bm{\Theta}\\
			\bm{H}
		\end{bmatrix}=\bm{\Sigma}-\bm{\Sigma} \bm{H}^{\top}(\bm{H}\bm{\Sigma} \bm{H}^{\top})^{\dagger}\bm{H}\bm{\Sigma}.
		$$
	\end{proposition}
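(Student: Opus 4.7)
The plan is to interpret both sides of the claimed identity as the orthogonal projector onto one and the same subspace of $\mathbb{R}^n$. Setting $\bm{A}:=[\bm{\Theta}\ \bm{H}^{\top}]\in\mathbb{R}^{n\times(n+M)}$, the standard factorization $\bm{A}^{\dagger}=(\bm{A}^{\top}\bm{A})^{\dagger}\bm{A}^{\top}$ gives $\bm{A}(\bm{A}^{\top}\bm{A})^{\dagger}\bm{A}^{\top}=\bm{A}\bm{A}^{\dagger}$, which is the orthogonal projector onto $\mathrm{range}(\bm{A})$. Hence $\bm{P}=\bm{I}_n-\bm{A}\bm{A}^{\dagger}$ is the orthogonal projector onto $\mathrm{range}(\bm{A})^{\perp}$. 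Because $\bm{\theta}\in\{0,1\}^n$, the diagonal matrices $\bm{\Theta}$ and $\bm{\Sigma}$ are themselves the orthogonal projectors onto the complementary coordinate subspaces $\mathrm{range}(\bm{\Theta})$ and $\mathrm{range}(\bm{\Sigma})$; in particular $\mathrm{range}(\bm{\Theta})^{\perp}=\mathrm{range}(\bm{\Sigma})$. Decomposing $\mathrm{range}(\bm{A})=\mathrm{range}(\bm{\Theta})+\mathrm{range}(\bm{H}^{\top})$ and taking orthogonal complements yields
\[
\mathrm{range}(\bm{A})^{\perp}=\mathrm{range}(\bm{\Sigma})\cap\ker(\bm{H}).
\]

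Next, I would analyze the right-hand side with $\bm{G}:=\bm{H}\bm{\Sigma}$. Since $\bm{\Sigma}^{2}=\bm{\Sigma}$, one has $\bm{G}\bm{G}^{\top}=\bm{H}\bm{\Sigma}\bm{H}^{\top}$, so
\[
\bm{\Sigma}\bm{H}^{\top}(\bm{H}\bm{\Sigma}\bm{H}^{\top})^{\dagger}\bm{H}\bm{\Sigma}=\bm{G}^{\top}(\bm{G}\bm{G}^{\top})^{\dagger}\bm{G}.
\]
An SVD computation shows that $\bm{G}^{\top}(\bm{G}\bm{G}^{\top})^{\dagger}\bm{G}$ is the orthogonal projector onto $\mathrm{range}(\bm{G}^{\top})=\mathrm{range}(\bm{\Sigma}\bm{H}^{\top})$. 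Since $\mathrm{range}(\bm{\Sigma}\bm{H}^{\top})\subseteq\mathrm{range}(\bm{\Sigma})$ and $\bm{\Sigma}$ is the orthogonal projector onto $\mathrm{range}(\bm{\Sigma})$, the difference $\bm{\Sigma}-\bm{G}^{\top}(\bm{G}\bm{G}^{\top})^{\dagger}\bm{G}$ is the orthogonal projector onto the orthogonal complement of $\mathrm{range}(\bm{\Sigma}\bm{H}^{\top})$ taken inside $\mathrm{range}(\bm{\Sigma})$.

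Finally, I would identify this latter complement with $\mathrm{range}(\bm{\Sigma})\cap\ker(\bm{H})$: for $v\in\mathrm{range}(\bm{\Sigma})$ one has $\bm{\Sigma}v=v$, so $v\perp\mathrm{range}(\bm{\Sigma}\bm{H}^{\top})$ precisely when $\bm{H}\bm{\Sigma}v=\bm{H}v=0$. Thus both sides of the stated equality are orthogonal projectors onto the same subspace $\mathrm{range}(\bm{\Sigma})\cap\ker(\bm{H})$, and so must coincide. The only nonroutine ingredient is the pseudoinverse identity $\bm{M}^{\top}(\bm{M}\bm{M}^{\top})^{\dagger}\bm{M}$ equals the orthogonal projection onto $\mathrm{range}(\bm{M}^{\top})$, used twice; I expect this is the main (still mild) obstacle, easily dispatched via the SVD of $\bm{M}$.
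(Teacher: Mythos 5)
Your argument is correct, and it is worth noting that the paper itself offers no proof of this proposition: it is stated as a quoted result with citations to Han--Sun and to Li--Sun--Toh, and the later theorem simply invokes ``Proposition \ref{pro10} and its proof procedure in [L.S.T.2020, Proposition 2].'' The proof in that reference is algebraic in flavor, working through the block structure of $\bigl[\begin{smallmatrix}\bm{\Theta}\\ \bm{H}\end{smallmatrix}\bigr][\bm{\Theta}\ \bm{H}^{\top}]$ and manipulating the pseudoinverse directly (using $\bm{\Theta}\bm{\Sigma}=0$ and full-rank-factorization-type identities). Your route is genuinely different and, to my mind, cleaner: you identify both sides as the orthogonal projector onto the single subspace $\mathrm{range}(\bm{\Sigma})\cap\ker(\bm{H})$, using only that $\bm{A}(\bm{A}^{\top}\bm{A})^{\dagger}\bm{A}^{\top}=\bm{A}\bm{A}^{\dagger}$ projects onto $\mathrm{range}(\bm{A})$, that $(\mathrm{range}(\bm{\Theta})+\mathrm{range}(\bm{H}^{\top}))^{\perp}=\mathrm{range}(\bm{\Sigma})\cap\ker(\bm{H})$, and that $P_{V}-P_{U}=P_{V\cap U^{\perp}}$ when $U\subseteq V$. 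All of these steps check out (in particular $\bm{G}\bm{G}^{\top}=\bm{H}\bm{\Sigma}\bm{H}^{\top}$ because $\bm{\Sigma}$ is a symmetric idempotent, and $\bm{G}^{\top}(\bm{G}\bm{G}^{\top})^{\dagger}\bm{G}=\bm{G}^{\dagger}\bm{G}$ is the projector onto $\mathrm{range}(\bm{G}^{\top})$). What the geometric argument buys is a coordinate-free explanation of \emph{why} the identity holds and why the 0/1 structure of $\bm{\theta}$ matters (it makes $\bm{\Theta}$ and $\bm{\Sigma}$ complementary orthogonal projectors); what the algebraic route buys is an explicit computational recipe, which is what the paper actually needs downstream when it evaluates $\bm{N}_0$ via full-rank factorizations. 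Either way, your proof is complete and valid.
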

	Next, we shall calculate the element $\bm{N}_0$ of the generalized HS-Jacobian matrix by virtue of the effective approach proposed in Proposition \ref{pro9} and Proposition \ref{pro10}.
	
	For given $\bm{y}\in\mathbb{R}^n$, we define the following index subsets of $\{1,\dots,n\}$:
	$$
	\mathcal{K}_1:=\{i\ |\ (\Pi_{\mathcal{C}}(\bm{y}))_i=0\},\ \mathcal{K}_2:=\{1,\dots,n\}\backslash\mathcal{K}_1.
	$$
	It is easy to know from the definition of $\mathcal{C}$ that $|\mathcal{K}_2|\neq0$. And we also have $ |\mathcal{K}_1|+|\mathcal{K}_2|=n.$
	\begin{theorem}
		Assume that $\bm{a}\in\mathbb{R}^n,b\in\mathbb{R}$ in problem \eqref{1.1} are given. For given $\bm{y}\in\mathbb{R}^n,$ denote 
		$$
		\bm{w}_i=\left\{
		\begin{aligned}
			&1,\ i\in\mathcal{K}_2, \\
			&0,\ otherwise,
		\end{aligned}
		\right.
		(\bm{e}_{\mathcal{K}_2}^n)_i=\left\{
		\begin{aligned}
			&1,\ i\in\mathcal{K}_2, \\
			&0,\ otherwise,
		\end{aligned}
		\right.
		(\bm{a}_{\mathcal{K}_2}^n)_i=\left\{
		\begin{aligned}
			&\bm{a}_i,\ i\in\mathcal{K}_2, \\
			&0,\ otherwise,
		\end{aligned}
		\right.  i=1,2,\dots,n.
		$$
		Then, the element $\bm{N}_0$ of the generalized HS-Jacobian
		for $\Pi_{\mathcal{C}}(\cdot)$ at $\bm{y}$ admits the following explicit expressions:
		
		\noindent{\bf{\uppercase\expandafter{\romannumeral1}}}. If $\bm{a}^{\top}\Pi_{\mathcal{C}}(\bm{y})\neq b$, then
		$$
		\bm{N}_0 = {\rm{Diag}}(\bm{w})-\frac{1}{|\mathcal{K}_2|}\bm{e}_{\mathcal{K}_2}^n(\bm{e}_{\mathcal{K}_2}^n)^{\top}.
		$$
		
		\noindent{\bf{\uppercase\expandafter{\romannumeral2}}}. If $\bm{a}^{\top}\Pi_{\mathcal{C}}(\bm{y})= b$, then the following two cases are taken into consideration. Denote 
		$$
		\eta:=\|\bm{a}_{\mathcal{K}_2}\|^2|\mathcal{K}_2|-(\bm{a}_{\mathcal{K}_2}^{\top}\bm{e}_{\mathcal{K}_2})^2.
		$$
		
		\noindent{\bf{(\romannumeral1)}} If $\eta\neq 0$, then
		$$
		\begin{aligned}
			\bm{N}_0=&{\rm{Diag}}(\bm{w})-\frac{1}{\eta}(\sqrt{|\mathcal{K}_2|}\bm{a}_{\mathcal{K}_2}^n-\|\bm{a}_{\mathcal{K}_2}\|\bm{e}_{\mathcal{K}_2}^n)(\sqrt{|\mathcal{K}_2|}\bm{a}_{\mathcal{K}_2}^n-\|\bm{a}_{\mathcal{K}_2}\|\bm{e}_{\mathcal{K}_2}^n)^{\top}\\
			&-\frac{\sqrt{|\mathcal{K}_2|}\|\bm{a}_{\mathcal{K}_2}\|-\bm{a}_{\mathcal{K}_2}^{\top}\bm{e}_{\mathcal{K}_2}}{\eta}(\bm{a}_{\mathcal{K}_2}^n(\bm{e}_{\mathcal{K}_2}^n)^{\top}+\bm{e}_{\mathcal{K}_2}^n(\bm{a}_{\mathcal{K}_2}^n)^{\top}).
		\end{aligned}
		$$
		
		\noindent{\bf{(\romannumeral2)}} If $\eta=0$, then
		$$
		\begin{aligned}
			\bm{N}_0={\rm{Diag}}(\bm{w})-\frac{1}{\eta_1}({\rm{sgn}}(\bm{a}_{\mathcal{K}_2}^{\top}\bm{e}_{\mathcal{K}_2})\|\bm{a}_{\mathcal{K}_2}\|\bm{a}_{\mathcal{K}_2}^n+\sqrt{|\mathcal{K}_2|}\bm{e}_{\mathcal{K}_2}^n)({\rm{sgn}}(\bm{a}_{\mathcal{K}_2}^{\top}\bm{e}_{\mathcal{K}_2})\|\bm{a}_{\mathcal{K}_2}\|\bm{a}_{\mathcal{K}_2}^n+\sqrt{|\mathcal{K}_2|}\bm{e}_{\mathcal{K}_2}^n)^{\top},
		\end{aligned}
		$$
		where $\eta_1:=(\|\bm{a}_{\mathcal{K}_2}\|^2+|\mathcal{K}_2|)^2$.
	\end{theorem}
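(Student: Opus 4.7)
The plan is to compute $\bm{N}_0$ explicitly via Proposition \ref{pro9} combined with Proposition \ref{pro10}. First I would determine the active set $\mathcal{I}(\bm{y})$ in \eqref{Iy}: since the first $n$ rows of $\bm{B}$ are $-\bm{e}_i^\top$ and the $(n+1)$-st row is $\bm{a}^\top$, one has $i \in \mathcal{I}(\bm{y}) \cap \{1,\dots,n\}$ iff $(\Pi_{\mathcal{C}}(\bm{y}))_i = 0$, i.e., $i \in \mathcal{K}_1$, and $n+1 \in \mathcal{I}(\bm{y})$ iff $\bm{a}^\top \Pi_{\mathcal{C}}(\bm{y}) = b$. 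Hence $\mathcal{I}(\bm{y}) = \mathcal{K}_1$ in Case \uppercase\expandafter{\romannumeral1} and $\mathcal{I}(\bm{y}) = \mathcal{K}_1 \cup \{n+1\}$ in Case \uppercase\expandafter{\romannumeral2}.

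Second, I would cast $\bm{N}_0$ into the format of Proposition \ref{pro10}. Take $\bm{\theta}$ to be the indicator of $\mathcal{K}_1$ so that $\bm{\Theta} = {\rm{Diag}}(\bm{\theta})$ and $\bm{\Sigma} = {\rm{Diag}}(\bm{w})$, and set $\bm{H} = \bm{e}^\top$ in Case \uppercase\expandafter{\romannumeral1}, $\bm{H} = [\bm{a}\ \bm{e}]^\top$ in Case \uppercase\expandafter{\romannumeral2}. The row spaces of $[\bm{B}_{\mathcal{I}(\bm{y})}^\top\ \bm{e}]^\top$ and $[\bm{\Theta}\ \bm{H}^\top]^\top$ coincide because $\{-\bm{e}_i^\top : i \in \mathcal{K}_1\}$ and the nonzero rows of $\bm{\Theta}$ span the same subspace. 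Since $\bm{N}_0$ in \eqref{BY} is the orthogonal projector onto the complement of that row space, it agrees with the matrix $\bm{P}$ in Proposition \ref{pro10}, yielding $\bm{N}_0 = \bm{\Sigma} - \bm{\Sigma}\bm{H}^\top(\bm{H}\bm{\Sigma}\bm{H}^\top)^{\dagger} \bm{H}\bm{\Sigma}$ with $\bm{\Sigma}\bm{a} = \bm{a}_{\mathcal{K}_2}^n$ and $\bm{\Sigma}\bm{e} = \bm{e}_{\mathcal{K}_2}^n$.

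Third, I would substitute and simplify. Case \uppercase\expandafter{\romannumeral1} is immediate because $\bm{H}\bm{\Sigma}\bm{H}^\top = |\mathcal{K}_2|$ is a positive scalar, producing the stated rank-one expression. In Case \uppercase\expandafter{\romannumeral2}, $\bm{H}\bm{\Sigma}\bm{H}^\top$ is the $2\times 2$ matrix with diagonal $(\|\bm{a}_{\mathcal{K}_2}\|^2, |\mathcal{K}_2|)$ and off-diagonal $\bm{a}_{\mathcal{K}_2}^\top \bm{e}_{\mathcal{K}_2}$, whose determinant equals $\eta$. When $\eta \neq 0$, I would invert via the explicit $2\times 2$ formula, expand $\bm{\Sigma}\bm{H}^\top(\bm{H}\bm{\Sigma}\bm{H}^\top)^{-1}\bm{H}\bm{\Sigma}$ as a combination of $\bm{a}_{\mathcal{K}_2}^n(\bm{a}_{\mathcal{K}_2}^n)^\top$, $\bm{e}_{\mathcal{K}_2}^n(\bm{e}_{\mathcal{K}_2}^n)^\top$, and the symmetric cross terms, and then match coefficients with the asserted factorization using the identity $(\sqrt{|\mathcal{K}_2|}\bm{a}_{\mathcal{K}_2}^n - \|\bm{a}_{\mathcal{K}_2}\|\bm{e}_{\mathcal{K}_2}^n)(\cdot)^\top = |\mathcal{K}_2|\bm{a}_{\mathcal{K}_2}^n(\bm{a}_{\mathcal{K}_2}^n)^\top + \|\bm{a}_{\mathcal{K}_2}\|^2\bm{e}_{\mathcal{K}_2}^n(\bm{e}_{\mathcal{K}_2}^n)^\top - \sqrt{|\mathcal{K}_2|}\|\bm{a}_{\mathcal{K}_2}\|(\bm{a}_{\mathcal{K}_2}^n(\bm{e}_{\mathcal{K}_2}^n)^\top + \bm{e}_{\mathcal{K}_2}^n(\bm{a}_{\mathcal{K}_2}^n)^\top)$. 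When $\eta = 0$, Cauchy--Schwarz equality forces $\bm{a}_{\mathcal{K}_2} = c\bm{e}_{\mathcal{K}_2}$ for some scalar $c$; then $\bm{H}\bm{\Sigma}\bm{H}^\top = |\mathcal{K}_2|\bm{u}\bm{u}^\top$ with $\bm{u} = [c,1]^\top$, whose Moore--Penrose inverse is $\bm{u}\bm{u}^\top/(|\mathcal{K}_2|(c^2+1)^2)$. Substituting this together with $\bm{a}_{\mathcal{K}_2}^n = c\bm{e}_{\mathcal{K}_2}^n$ collapses the subtraction to $|\mathcal{K}_2|^{-1}\bm{e}_{\mathcal{K}_2}^n(\bm{e}_{\mathcal{K}_2}^n)^\top$, which I would then rewrite in the symmetric form stated in the theorem using ${\rm{sgn}}(\bm{a}_{\mathcal{K}_2}^\top \bm{e}_{\mathcal{K}_2}) = {\rm{sgn}}(c)$, $\|\bm{a}_{\mathcal{K}_2}\| = |c|\sqrt{|\mathcal{K}_2|}$, and $\eta_1 = |\mathcal{K}_2|^2(c^2+1)^2$.

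The main obstacle is the algebraic bookkeeping in Case \uppercase\expandafter{\romannumeral2}: in sub-case (\romannumeral1) one must verify that the four-term expansion produced by the $2\times 2$ inverse matches the prescribed rank-two factorization exactly (a direct coefficient comparison on each of the four outer products suffices), and in sub-case (\romannumeral2) one must carefully compute the pseudo-inverse of the rank-one Gram matrix and handle the degenerate instance $c = 0$, where ${\rm{sgn}}(\bm{a}_{\mathcal{K}_2}^\top \bm{e}_{\mathcal{K}_2}) = 0$ so only the $\sqrt{|\mathcal{K}_2|}\bm{e}_{\mathcal{K}_2}^n$ term in the stated outer product survives.
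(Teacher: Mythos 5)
Your proposal is correct and follows essentially the same route as the paper: identify the active set $\mathcal{I}(\bm{y})$, pass via Proposition \ref{pro10} to the reduced form $\bm{\Sigma}-\bm{\Sigma}\bm{H}^{\top}(\bm{H}\bm{\Sigma}\bm{H}^{\top})^{\dagger}\bm{H}\bm{\Sigma}$ with $\bm{\Sigma}={\rm{Diag}}(\bm{w})$, and evaluate the (pseudo-)inverse of the resulting $1\times1$ or $2\times2$ Gram matrix, matching coefficients against the stated rank-one/rank-two factorizations. The only noteworthy divergence is in the singular sub-case $\eta=0$, where you invoke the Cauchy--Schwarz equality to write $\bm{a}_{\mathcal{K}_2}=c\,\bm{e}_{\mathcal{K}_2}$ and the Gram matrix as $|\mathcal{K}_2|\bm{u}\bm{u}^{\top}$ with $\bm{u}=[c,1]^{\top}$, inverting it in one stroke, whereas the paper splits into $\bm{a}_{\mathcal{K}_2}\neq\bm{0}$ and $\bm{a}_{\mathcal{K}_2}=\bm{0}$ and computes the Moore--Penrose inverse through a full-rank factorization; your variant is marginally cleaner and arrives at the same matrix.
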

	\begin{proof}
		
		{\bf{\uppercase\expandafter{\romannumeral1}.}} If $\bm{a}^{\top}\Pi_{\mathcal{C}}(\bm{y})\neq b$, the matrix $\bm{B}_{\mathcal{I}(\bm{y})}$ given as in \eqref{BY} has the form: $\bm{B}_{\mathcal{I}(\bm{y})}=-\bm{I}_{\mathcal{K}_1}$.
		After calculation, we obtain
		$$
		\begin{bmatrix}
			\bm{B}_{\mathcal{I}(\bm{y})}\\
			\bm{e}^{\top}
		\end{bmatrix}[\bm{B}_{\mathcal{I}(\bm{y})}^{\top}\ \bm{e}]=\begin{bmatrix}
			\bm{I}_{|\mathcal{K}_1|}&  -\bm{e}_{\mathcal{K}_1}\\
			-\bm{e}_{\mathcal{K}_1}^{\top}&  n
		\end{bmatrix},
		$$
		which is clearly  a nonsingular matrix due  to $|{\mathcal{K}_2}|\neq 0$. Thus, by elementary row transformation, we have
		\begin{align*}
			\left(\begin{bmatrix}
				\bm{B}_{\mathcal{I}(\bm{y})}\\
				\bm{e}^{\top}
			\end{bmatrix}[\bm{B}_{\mathcal{I}(\bm{y})}^{\top}\ \bm{e}]\right)^{-1}&=\begin{bmatrix}
				\bm{I}_{|\mathcal{K}_1|}+\frac{1}{|\mathcal{K}_2|}\bm{e}_{\mathcal{K}_1}(\bm{e}_{\mathcal{K}_1})^{\top}&  \frac{1}{|\mathcal{K}_2|}\bm{e}_{\mathcal{K}_1}\\
				\frac{1}{|\mathcal{K}_2|}\bm{e}_{\mathcal{K}_1}^{\top}&  \frac{1}{|\mathcal{K}_2|}
			\end{bmatrix}\\
			&=
			\begin{bmatrix}
				\bm{I}_{|\mathcal{K}_1|}& 0\\
				0& 0
			\end{bmatrix}+\frac{1}{|\mathcal{K}_2|}\begin{bmatrix}
				\bm{e}_{\mathcal{K}_1}\\
				1
			\end{bmatrix}\begin{bmatrix}
				\bm{e}_{\mathcal{K}_1}^{\top}&1
			\end{bmatrix}.
		\end{align*}
		Then
		$$
		[\bm{B}_{\mathcal{I}(\bm{y})}^{\top}\ \bm{e}]\left(
		\begin{bmatrix}
			\bm{B}_{\mathcal{I}(\bm{y})}\\
			\bm{e}^{\top}
		\end{bmatrix}[\bm{B}_{\mathcal{I}(\bm{y})}^{\top}\ \bm{e}]\right)^{-1}\begin{bmatrix}
			\bm{B}_{\mathcal{I}(\bm{y})}\\
			\bm{e}^{\top}
		\end{bmatrix}=\bm{I}_{\mathcal{K}_1}^{\top}\bm{I}_{\mathcal{K}_1}+\frac{1}{|\mathcal{K}_2|}\bm{e}_{\mathcal{K}_2}^n(\bm{e}_{\mathcal{K}_2}^n)^{\top}.
		$$
		Therefore, invoking \eqref{BY}, we get 
		$$
		\bm{N}_0=\bm{I}_n- [\bm{B}_{\mathcal{I}(\bm{y})}^{\top}\ \bm{e}]\left(
		\begin{bmatrix}
			\bm{B}_{\mathcal{I}(\bm{y})}\\
			\bm{e}^{\top}
		\end{bmatrix}[\bm{B}_{\mathcal{I}(\bm{y})}^{\top}\ \bm{e}]\right)^{\dagger}\begin{bmatrix}
			\bm{B}_{\mathcal{I}(\bm{y})}\\
			\bm{e}^{\top}
		\end{bmatrix}= {\rm{Diag}}(\bm{w})-\frac{1}{|\mathcal{K}_2|}\bm{e}_{\mathcal{K}_2}^n(\bm{e}_{\mathcal{K}_2}^n)^{\top}.
		$$
		{\bf{\uppercase\expandafter{\romannumeral2}.}} If $\bm{a}^{\top}\Pi_{\mathcal{C}}(\bm{y})= b$, we have the explicit form for $\bm{B}_{\mathcal{I}(\bm{y})}$ as follows:
		$$
		\bm{B}_{\mathcal{I}(\bm{y})}=\begin{bmatrix}
			-\bm{I}_{\mathcal{K}_1}\\
			\bm{a}^{\top}
		\end{bmatrix}.
		$$
		Thus,
		$$
		\begin{bmatrix}
			\bm{B}_{\mathcal{I}(\bm{y})}\\
			\bm{e}^{\top}
		\end{bmatrix}[\bm{B}_{\mathcal{I}(\bm{y})}^{\top}\ \bm{e}]=\begin{bmatrix}
			-\bm{I}_{\mathcal{K}_1}\\
			\bm{a}^{\top}\\
			\bm{e}^{\top}
		\end{bmatrix}\begin{bmatrix}
			-\bm{I}_{\mathcal{K}_1}^{\top} &\bm{a}&\bm{e}
		\end{bmatrix}.
		$$	
		Denote $ \tilde{\bm{H}}:=\begin{bmatrix}
			\bm{a}^{\top}\\
			\bm{e}^{\top}
		\end{bmatrix}.$ 
		Then, together with Proposition \ref{pro10} and its proof procedure in \cite[Proposition 2]{L.S.T.2020}, one obtains
		\begin{equation}\label{N0}
			\bm{N}_0={\rm{Diag}}(\bm{w})-{\rm{Diag}}(\bm{w})\tilde{\bm{H}}^{\top}(\tilde{\bm{H}}{\rm{Diag}}(\bm{w})\tilde{\bm{H}}^{\top})^{\dagger}\tilde{\bm{H}}{\rm{Diag}}(\bm{w}).
		\end{equation}
		After a simple manipulation, we derive that
		$$
		\tilde{\bm{H}}{\rm{Diag}}(\bm{w})\tilde{\bm{H}}^{\top}=\begin{bmatrix}
			\|\bm{a}_{\mathcal{K}_2}\|^2 & \bm{a}_{\mathcal{K}_2}^{\top}\bm{e}_{\mathcal{K}_2}\\
			\bm{a}_{\mathcal{K}_2}^{\top}\bm{e}_{\mathcal{K}_2} &|\mathcal{K}_2|
		\end{bmatrix}.
		$$
		
		\noindent{\bf{(\romannumeral1)}} If $\eta:=\|\bm{a}_{\mathcal{K}_2}\|^2|\mathcal{K}_2|-(\bm{a}_{\mathcal{K}_2}^{\top}\bm{e}_{\mathcal{K}_2})^2\neq 0$, then $\tilde{\bm{H}}{\rm{Diag}}(\bm{w})\tilde{\bm{H}}^{\top}$ is nonsingular. Using the  elementary row transformation, we have
		$$
		(\tilde{\bm{H}}{\rm{Diag}}(\bm{w})\tilde{\bm{H}}^{\top})^{-1}=\frac{1}{\eta}\begin{bmatrix}
			|\mathcal{K}_2| & -\bm{a}_{\mathcal{K}_2}^{\top}\bm{e}_{\mathcal{K}_2}\\
			-\bm{a}_{\mathcal{K}_2}^{\top}\bm{e}_{\mathcal{K}_2} &\|\bm{a}_{\mathcal{K}_2}\|^2
		\end{bmatrix},
		$$
		which implies 
		$$
		\begin{aligned}
			&{\rm{Diag}}(\bm{w})\tilde{\bm{H}}^{\top}(\tilde{\bm{H}}{\rm{Diag}}(\bm{w})\tilde{\bm{H}}^{\top})^{-1}\tilde{\bm{H}}{\rm{Diag}}(\bm{w})\\
			&=\frac{1}{\eta}\left(|\mathcal{K}_2|\bm{a}_{\mathcal{K}_2}^n(\bm{a}_{\mathcal{K}_2}^n)^{\top}-(\bm{a}_{\mathcal{K}_2}^{\top}\bm{e}_{\mathcal{K}_2})\bm{e}_{\mathcal{K}_2}^n(\bm{a}_{\mathcal{K}_2}^n)^{\top}-(\bm{a}_{\mathcal{K}_2}^{\top}\bm{e}_{\mathcal{K}_2})\bm{a}_{\mathcal{K}_2}^n(\bm{e}_{\mathcal{K}_2}^n)^{\top}+\|\bm{a}_{\mathcal{K}_2}\|^2\bm{e}_{\mathcal{K}_2}^n(\bm{e}_{\mathcal{K}_2}^n)^{\top}\right)\\
			&=\frac{1}{\eta}(\sqrt{|\mathcal{K}_2|}\bm{a}_{\mathcal{K}_2}^n-\|\bm{a}_{\mathcal{K}_2}\|\bm{e}_{\mathcal{K}_2}^n)(\sqrt{|\mathcal{K}_2|}\bm{a}_{\mathcal{K}_2}^n-\|\bm{a}_{\mathcal{K}_2}\|\bm{e}_{\mathcal{K}_2}^n)^{\top}\\
			&\ +\frac{\sqrt{|\mathcal{K}_2|}\|\bm{a}_{\mathcal{K}_2}\|-\bm{a}_{\mathcal{K}_2}^{\top}\bm{e}_{\mathcal{K}_2}}{\eta}(\bm{a}_{\mathcal{K}_2}^n(\bm{e}_{\mathcal{K}_2}^n)^{\top}+\bm{e}_{\mathcal{K}_2}^n(\bm{a}_{\mathcal{K}_2}^n)^{\top}).
		\end{aligned}
		$$
		Combining this with \eqref{N0} yields
		$$
		\begin{aligned}
			\bm{N}_0=&{\rm{Diag}}(\bm{w})-\frac{1}{\eta}(\sqrt{|\mathcal{K}_2|}\bm{a}_{\mathcal{K}_2}^n-\|\bm{a}_{\mathcal{K}_2}\|\bm{e}_{\mathcal{K}_2}^n)(\sqrt{|\mathcal{K}_2|}\bm{a}_{\mathcal{K}_2}^n-\|\bm{a}_{\mathcal{K}_2}\|\bm{e}_{\mathcal{K}_2}^n)^{\top}\\
			&-\frac{\sqrt{|\mathcal{K}_2|}\|\bm{a}_{\mathcal{K}_2}\|-\bm{a}_{\mathcal{K}_2}^{\top}\bm{e}_{\mathcal{K}_2}}{\eta}(\bm{a}_{\mathcal{K}_2}^n(\bm{e}_{\mathcal{K}_2}^n)^{\top}+\bm{e}_{\mathcal{K}_2}^n(\bm{a}_{\mathcal{K}_2}^n)^{\top}).
		\end{aligned}
		$$
		\noindent{\bf{(\romannumeral2)}} If $\eta:=\|\bm{a}_{\mathcal{K}_2}\|^2|\mathcal{K}_2|-(\bm{a}_{\mathcal{K}_2}^{\top}\bm{e}_{\mathcal{K}_2})^2= 0$, then $\tilde{\bm{H}}{\rm{Diag}}(\bm{w})\tilde{\bm{H}}^{\top}$ is singular. Next, we divide our discussions into the following two cases:
		
		\noindent{\bf{Case 1:}} If $\bm{a}_{\mathcal{K}_2}\neq {\bf{0}}$, 
		$\tilde{\bm{H}}{\rm{Diag}}(\bm{w})\tilde{\bm{H}}^{\top}$ admits the following full rank factorization (cf. \cite{G.T.N.1960}):
		$$
		\tilde{\bm{H}}{\rm{Diag}}(\bm{w})\tilde{\bm{H}}^{\top}=\bm{FG}\ {\rm{with}}\ \bm{F}= \begin{bmatrix}
			\|\bm{a}_{\mathcal{K}_2}\|^2\\
			\bm{a}_{\mathcal{K}_2}^{\top}\bm{e}_{\mathcal{K}_2} 
		\end{bmatrix},\ {\rm{and}}\
		\bm{G} = \begin{bmatrix}
			1&	\displaystyle{\frac{\bm{a}_{\mathcal{K}_2}^{\top}\bm{e}_{\mathcal{K}_2}}{\|\bm{a}_{\mathcal{K}_2}\|^2}}
		\end{bmatrix}.
		$$
		Then, we compute the Moore-Penrose inverse of $\tilde{\bm{H}}{\rm{Diag}}(\bm{w})\tilde{\bm{H}}^{\top}$ by
		$$
		\begin{aligned}
			(\tilde{\bm{H}}{\rm{Diag}}(\bm{w})\tilde{\bm{H}}^{\top})^{\dagger}&=\bm{G}^{\top}(\bm{GG}^{\top})^{-1}(\bm{F}^{\top}\bm{F})^{-1}\bm{F}^{\top}\\
			&=\frac{1}{(|\mathcal{K}_2|+\|\bm{a}_{\mathcal{K}_2}\|^2)^2}\begin{bmatrix}
				\|\bm{a}_{\mathcal{K}_2}\|^2 & \bm{a}_{\mathcal{K}_2}^{\top}\bm{e}_{\mathcal{K}_2}\\
				\bm{a}_{\mathcal{K}_2}^{\top}\bm{e}_{\mathcal{K}_2} & |\mathcal{K}_2|
			\end{bmatrix}.
		\end{aligned}
		$$
		Hence, one can obtains that
		$$
		\begin{aligned}
			&{\rm{Diag}}(\bm{w})\tilde{\bm{H}}^{\top}(\tilde{\bm{H}}{\rm{Diag}}(\bm{w})\tilde{\bm{H}}^{\top})^{\dagger}\tilde{\bm{H}}{\rm{Diag}}(\bm{w})
			\\
			&=\frac{1}{\eta_1}\left(\|\bm{a}_{\mathcal{K}_2}\|^2\bm{a}_{\mathcal{K}_2}^n(\bm{a}_{\mathcal{K}_2}^n)^{\top}+(\bm{a}_{\mathcal{K}_2}^{\top}\bm{e}_{\mathcal{K}_2})\bm{e}_{\mathcal{K}_2}^n(\bm{a}_{\mathcal{K}_2}^n)^{\top}+(\bm{a}_{\mathcal{K}_2}^{\top}\bm{e}_{\mathcal{K}_2})\bm{a}_{\mathcal{K}_2}^n(\bm{e}_{\mathcal{K}_2}^n)^{\top}+|\mathcal{K}_2|\bm{e}_{\mathcal{K}_2}^n(\bm{e}_{\mathcal{K}_2}^n)^{\top}\right)\\
			&=\frac{1}{\eta_1}({\rm{sgn}}(\bm{a}_{\mathcal{K}_2}^{\top}\bm{e}_{\mathcal{K}_2})\|\bm{a}_{\mathcal{K}_2}\|\bm{a}_{\mathcal{K}_2}^n+\sqrt{|\mathcal{K}_2|}\bm{e}_{\mathcal{K}_2}^n)({\rm{sgn}}(\bm{a}_{\mathcal{K}_2}^{\top}\bm{e}_{\mathcal{K}_2})\|\bm{a}_{\mathcal{K}_2}\|\bm{a}_{\mathcal{K}_2}^n+\sqrt{|\mathcal{K}_2|}\bm{e}_{\mathcal{K}_2}^n)^{\top},
		\end{aligned}
		$$
		where $\eta_1=(\|\bm{a}_{\mathcal{K}_2}+|\mathcal{K}_2|\|^2)^2$ and the second equality holds due to 
		$$
		{\rm{sgn}}(\bm{a}_{\mathcal{K}_2}^{\top}\bm{e}_{\mathcal{K}_2})\|\bm{a}_{\mathcal{K}_2}\| |\mathcal{K}_2|=({\rm{sgn}}(\bm{a}_{\mathcal{K}_2}^{\top}\bm{e}_{\mathcal{K}_2}))^2(\bm{a}_{\mathcal{K}_2}^{\top}\bm{e}_{\mathcal{K}_2})=\bm{a}_{\mathcal{K}_2}^{\top}\bm{e}_{\mathcal{K}_2}.
		$$
		Therefore, it holds that
		$$
		\begin{aligned}
			\bm{N}_0={\rm{Diag}}(\bm{w})-\frac{1}{\eta_1}({\rm{sgn}}(\bm{a}_{\mathcal{K}_2}^{\top}\bm{e}_{\mathcal{K}_2})\|\bm{a}_{\mathcal{K}_2}\|\bm{a}_{\mathcal{K}_2}^n+\sqrt{|\mathcal{K}_2|}\bm{e}_{\mathcal{K}_2}^n)({\rm{sgn}}(\bm{a}_{\mathcal{K}_2}^{\top}\bm{e}_{\mathcal{K}_2})\|\bm{a}_{\mathcal{K}_2}\|\bm{a}_{\mathcal{K}_2}^n+\sqrt{|\mathcal{K}_2|}\bm{e}_{\mathcal{K}_2}^n)^{\top}.
		\end{aligned}
		$$
		
		\noindent{\bf{Case 2:}} If $\bm{a}_{\mathcal{K}_2}= {\bf{0}}$, $\tilde{\bm{H}}{\rm{Diag}}(\bm{w})\tilde{\bm{H}}^{\top}$ admits the following full rank factorization:
		$$
		\tilde{\bm{H}}{\rm{Diag}}(\bm{w})\tilde{\bm{H}}^{\top}=\tilde{\bm{F}}\tilde{\bm{G}}\ {\rm{with}}\ \tilde{\bm{F}}= \begin{bmatrix}
			0\\
			\sqrt{|\mathcal{K}_2|}
		\end{bmatrix},\ {\rm{and}}\
		\tilde{\bm{G}} = \begin{bmatrix}
			0&	\sqrt{|\mathcal{K}_2|}
		\end{bmatrix}.
		$$
		Then the Moore-Penrose inverse of $\tilde{\bm{H}}{\rm{Diag}}(\bm{w})\tilde{\bm{H}}^{\top}$ is given by
		$$
		\begin{aligned}
			(\tilde{\bm{H}}{\rm{Diag}}(\bm{w})\tilde{\bm{H}}^{\top})^{\dagger}=\tilde{\bm{F}}^{\top}(\tilde{\bm{G}}\tilde{\bm{G}}^{\top})^{-1}(\tilde{\bm{F}}^{\top}\tilde{\bm{F}})^{-1}\tilde{\bm{F}}^{\top}
			=\frac{1}{|\mathcal{K}_2|^2}\begin{bmatrix}
				0\\
				\sqrt{|\mathcal{K}_2|}
			\end{bmatrix} \begin{bmatrix}
				0&	\sqrt{|\mathcal{K}_2|}
			\end{bmatrix}.
		\end{aligned}
		$$
		It is not difficult to derive that
		$$
		\bm{N}_0={\rm{Diag}}(\bm{w})-\frac{1}{|\mathcal{K}_2|^2}\bm{e}_{\mathcal{K}_2}^m(\bm{e}_{\mathcal{K}_2}^m)^{\top}.
		$$
		
		As a result, combining {\bf{Case 1}} with {\bf{Case 2}}, we  know that if $\eta=0$, then 
		$$
		\begin{aligned}
			\bm{N}_0={\rm{Diag}}(\bm{w})-\frac{1}{\eta_1}({\rm{sgn}}(\bm{a}_{\mathcal{K}_2}^{\top}\bm{e}_{\mathcal{K}_2})\|\bm{a}_{\mathcal{K}_2}\|\bm{a}_{\mathcal{K}_2}^n+\sqrt{|\mathcal{K}_2|}\bm{e}_{\mathcal{K}_2}^n)({\rm{sgn}}(\bm{a}_{\mathcal{K}_2}^{\top}\bm{e}_{\mathcal{K}_2})\|\bm{a}_{\mathcal{K}_2}\|\bm{a}_{\mathcal{K}_2}^n+\sqrt{|\mathcal{K}_2|}\bm{e}_{\mathcal{K}_2}^n)^{\top},
		\end{aligned}
		$$
		where $\eta_1=(\|\bm{a}_{\mathcal{K}_2}\|^2+|\mathcal{K}_2|)^2$.
		
		With the above arguments, we complete the proof.	
	\end{proof}
	
	\section{Conlusions}\label{sect:7}
	In this paper, we  develop two efficient algorithms for finding the projection onto the intersection of  simplex and singly linear constraint. The first algorithm, referred to as LRSA, is based on the Lagrangian duality approach and the secant method. The second algorithm is an algorithm based on the semismooth Newton method, called SSN, where semismooth Newton method is developed to solve  the nonsmooth equation. Numerical results show the superior performance of the Algorithm LRSA compared to the  Algorithm SSN  and the state-of-the-art solver called Gurobi. In addition, we derive the generalized HS-Jacobian of the studied projection. 

\section{Acknowledgments}

The work of Yong-Jin Liu was in part supported by the National Natural Science Foundation of China (Grant No. 12271097),  the Key Program of National Science Foundation of Fujian Province of China (Grant No. 2023J02007), and the Fujian Alliance of Mathematics (Grant No. 2023SXLMMS01).

\section*{Declarations}

\begin{itemize}
	\item Conflict of interest: The authors declare that they have no conflict of interest.
	\item Code availability:  Code for data  analysis is available at \url{https://lcondat.github.io/download/condat_simplexproj.c} 
	\item Data availability: The data that support the findings of this study are openly available in Ken French' s website \url{https://mba.tuck.dartmouth.edu/pages/faculty/ken.french/data_library.html}
	
\end{itemize}

\end{document}